\newcommand{\DD}{\mathbf{D}}
\newcommand{\CC}{\mathbb{C}}
\newcommand{\ZZ}{\mathbb{Z}}
\newcommand{\NN}{\mathbb{N}}
\newcommand{\RR}{\mathbb{R}}
\newcommand{\PP}{\mathbb{P}}
\newcommand{\EE}{\mathbb{E}}
\newcommand{\calL}{\mathcal{L}}
\newcommand{\calS}{\mathcal{S}}
\newcommand{\calF}{\mathcal{F}}
\newcommand{\calO}{\mathcal{O}}
\newcommand{\calV}{\mathcal{V}}
\newcommand{\calT}{\mathcal{T}}
\newcommand{\calI}{\mathcal{I}}
\newcommand{\dth}{{\: \rm d}\theta}
\newcommand{\pl}[1]{\foreignlanguage{polish}{#1}}
\newcommand{\norm}[1]{\lVert {#1} \rVert}
\newcommand{\abs}[1]{\lvert {#1} \rvert}
\newcommand{\sprod}[2]{\langle {#1}, {#2} \rangle}
\newcommand{\Log}{\operatorname{Log}}
\theoremstyle{plain}
\newtheorem{theorem}{Theorem}[section]
\newtheorem{lemma}[theorem]{Lemma}
\newtheorem{claim}{Claim}
\newtheorem{corollary}[theorem]{Corollary}
\theoremstyle{plain}
\newcounter{thm}
\newtheorem{main_theorem}[thm]{Theorem}
\theoremstyle{remark}
\newtheorem{remark}{Remark}
\title[Limit theorems for random walks]
{Limit theorems for random walks}
\author{Alexander Bendikov}
\thanks{Research of A.~Bendikov was supported by National Science Centre, Poland, Grant DEC-2012/05/B/ST1/00613}
\address{
	Alexander Bendikov\\
	Instytut Matematyczny\\
	Uniwersytet \pl{Wroc{\lll}awski}\\
	Pl. Grun\-waldzki 2/4\\
	50-384 \pl{Wroc{\lll}aw}\\
	Poland}
\email{bendikov@math.uni.wroc.pl}
\author{Wojciech Cygan}
\thanks{Research of W.~Cygan was supported by National Science Centre, Poland, Grant DEC-2013/11/N/ST1/03605 and by
	Austrian Science Fund project FWF P24028}
\address{
	Wojciech Cygan\\
	Instytut Matematyczny\\
	Uniwersytet \pl{Wroc{\lll}awski}\\
	Pl. Grun\-waldzki 2/4\\
	50-384 \pl{Wroc{\lll}aw}\\
	Poland}
\email{cygan@math.uni.wroc.pl}
\author{Bartosz Trojan}
\address{
	\pl{
	Bartosz Trojan\\
	Wydzia\l{} Matematyki,
	Politechnika Wroc\l{}awska\\
	Wyb. Wyspia\'{n}skiego 27\\
	50-370 Wroc\l{}aw\\
	Poland}
}
\email{bartosz.trojan@pwr.edu.pl}
\subjclass[2010]{05C81, 60G50, 44A10, 46F12}
\keywords{asymptotic formula, random walk, regular variation, subordination, strong ratio limit theorem, 
functional limit theorem}
\numberwithin{equation}{section}
\begin{document}
\selectlanguage{english}

\begin{abstract}
We consider a random walk $S_{\tau}$ which is obtained from the simple
random walk $S$ by a discrete time version of Bochner's subordination. We prove that under certain
conditions on the subordinator $\tau$ appropriately scaled random walk $S_{\tau}$ converges in the Skorohod
space to the symmetric $\alpha$-stable process $B^{\alpha}$. We also prove asymptotic formula for the
transition function of $S_{\tau}$ similar to the P\'{o}lya's asymptotic formula for $B^{\alpha}$.
\end{abstract}

\maketitle

\section{Introduction}
It is well known and due to P\'{o}lya \cite{polya} that the transition density $p_{\alpha }(x, t)$, 
$0 < \alpha < 2$, of the one-dimensional symmetric $\alpha $-stable process has the following asymptotic expression
\footnote{We write $f\sim g$ at $a$ if $f(x)/g(x)\to 1$, as $x\to a$.}
\begin{equation}
	\label{Polya}
	p_{\alpha }(x, t)
	\sim
	c_\alpha t |x|^{-1-\alpha},\quad 
	\mathrm{as}
	\ t |x|^{-\alpha} \rightarrow 0
\end{equation}
where 
\begin{align}
	\label{eq:25}
	c_\alpha
	=
	\alpha 2^{\alpha -1}
	\pi ^{-3/2}
	\Gamma \left(\frac{\alpha+ 1}{2}\right)\Gamma \left(\frac{\alpha}{2}\right)
	\sin \Big(\frac{\pi \alpha}{2} \Big).
\end{align}
For multidimensional symmetric $\alpha$-stable process similar asymptotic formula has been found by
Blumenthal and Getoor \cite{blumen}. Both results were obtained by using the method of characteristic functions
and the scaling property of $\alpha $-stable distributions. An elegant proof of the P\'{o}lya--Blumenthal--Getoor
asymptotic formula was proposed by Bendikov \cite{b}. Bendikov used the fact that the symmetric 
$\alpha$-stable process can be obtained from the standard Brownian motion by means of the subordination, the notion
introduced in the theory of Markov semigroups by Bochner \cite{bochner}. 

From the probabilistic point of view, the $\alpha$-stable process $B^\alpha = (B^\alpha_{t} : t \geq 0)$,
$0 < \alpha < 2$, is obtained from the Brownian motion $B = (B_{t}: t \geq 0)$ by setting
$B^\alpha _{t}=B_{\sigma ^\alpha _{t}}$ where the \emph{subordinator} $\sigma^\alpha = (\sigma ^\alpha _{s} : s \geq 0)$
is independent of $B$ non-decreasing L\'{e}vy process such that 
\[
	\EE \big( e^{-\lambda \sigma _s^\alpha}\big) = e^{-s\lambda ^{\alpha/2}},\quad \lambda \geq 0.
\]
From the analytical point of view, the transition density $p_\alpha (x, t)$ of
$B^\alpha $ is obtained as a time average of the transition density $%
p(x, t)$ of $B$, i.e. 
\begin{equation*}
	p_\alpha (x, t) =\int_{[0, \infty)} p(x, s) {\: \rm d} \nu ^\alpha _{t}(s)
\end{equation*}
where $\nu ^\alpha _{t}$ is the distribution function of the random
variable $\sigma ^\alpha _{t}$. 
In particular, the minus infinitesimal generator of the process $B^\alpha $ is $(-\Delta)^{\alpha /2}$
where $\Delta$ is the classical Laplace operator.

In this article we study a certain class of random walks obtained from finite range random walks, in particular the
simple random walk, by means of discrete time subordination recently developed in \cite{bs}. We prove that
any random walk from our class, being appropriately scaled, converges in the Skorohod space to the $\alpha$-stable
process $B^\alpha$. We also prove that a discrete space and time counterpart of the P\'{o}lya's asymptotic formula
\eqref{Polya} holds true.

To illustrate the main results of the paper we consider $S$ to be the simple random walk in $\ZZ ^d$,
a discrete space and time counterpart of the Brownian motion in $\RR^d$. Let $0 < \alpha < 2$ and $S^\alpha$
be the subordinate random walk defined via the functional calculus of (minus) generators of discrete time Markov
semigroups
\begin{align*}
	I - P_\alpha =(I-P)^{\alpha /2}.
\end{align*}
Here $P$ and $P_\alpha$ are the transition operators of $S$ and $S^\alpha$, respectively. In other words,
$S^\alpha$ coincides in distribution with $S_{\tau ^\alpha }$ where the discrete subordinator $\tau ^\alpha $
is uniquely determined by the formula 
\begin{align*}
	\EE \big( e^{-\lambda \tau ^\alpha _{n}}\big)
	=\left(1-(1-e^{-\lambda})^{\alpha /2}\right)^n,\quad \lambda \geq 0.
\end{align*} 
In Sections \ref{sec:FLT} and
\ref{sec:Asymp} we prove the following two theorems (or even more general versions of them).
\begin{main_theorem}
	\label{thm:5}
	For each $0 < \alpha < 2$, the sequence of random elements 
	\[
		\big(n^{-1/\alpha} S^\alpha_{[nt]} : t \geq 0\big)
	\] 
	converges in the Skorohod space to $B^\alpha$.
\end{main_theorem}
Let $\{e_1, \ldots, e_d\}$ be the standard basis for $\RR^d$ and $\norm{\:\cdot\:}_2$ the Euclidean norm.
\begin{main_theorem}
	\label{thm:3}
	Let $0 < \alpha < 2$ and $p_\alpha (x, n)$ be the transition function of $S^\alpha$, 
	then as $n$ and $\norm{x}_2$ tend to infinity we have
	\begin{enumerate}
	\item if $n \norm{x}_2^{- \alpha}$ tends to zero then
	\begin{align*}
		\frac{1}{2d} \sum_{j = 1}^d \big(p_\alpha(x + e_j, n) + 2 p_\alpha(x, n) + p_\alpha(x - e_j, n)\big)
		\sim
		C_{d, \alpha}
		n \norm{x}_2^{-d - \alpha};
	\end{align*}
	\item if $n \norm{x}_2^{-\alpha}$ tends to infinity then
	\begin{align*}
		p_\alpha(x, n) \sim p_\alpha(0, n) \sim D_{d, \alpha} n^{-d/\alpha}.
	\end{align*}
	\end{enumerate}
\end{main_theorem}
	The constants in Theorem \ref{thm:3} are given explicitly by the formulas
	\begin{align*}
		C_{d,\alpha} = 
		\alpha
		2^{\alpha/2} \pi^{-d/2-1} 
		d^{-\alpha /2}\, \Gamma\Big(\frac{\alpha}{2}\Big) %
		\Gamma\Big(\frac{d + \alpha}{2}\Big) 
		\sin \Big(\frac{\pi \alpha}{2}\Big)
	\end{align*}
	and
	\[
		D_{d, \alpha} = (2 \pi)^{d/2} \frac{\Gamma(1 + d/\alpha)}{\Gamma(1 + d/2)}.
	\]
	Let us observe that $C_{d, \alpha}$ has the factor $d^{-\alpha/2}$ which is not present in \eqref{eq:25}.
	This is a consequence of the asymptotic behaviour of the simple random walk on $\ZZ^d$.

The main ingredients in our proofs are the continuity principle of
superposition in the Skorohod space due to Whitt \cite{Whitt}, and
the asymptotic formulas for the discrete time subordinators. In Section \ref{sec:3} we prove the following theorem.
\begin{main_theorem}
	\label{thm:6}
	Let $F_n(t) = \PP(\tau_n \leq t)$. Whenever $n t^{-\alpha}$ tends to zero,
	\[
		1 - F_n(t) \sim \frac{n t^{-\alpha}}{\Gamma(1 - \alpha/2)}.
	\]
\end{main_theorem}
Let us mention that Theorem \ref{thm:6} can be regarded as a version of the global large deviation result for random
walks with increments belonging to the domain of attraction of a stable law, see e.g. \cite{doney, nag} and references
therein.

In this work we consider more general subordinators. Namely, for any Bernstein function
$\psi$ which varies regularly at zero we construct related subordinator $\tau$ and random walk $S_\tau$,
and we establish $\tau$-versions of Theorems \ref{thm:5}, \ref{thm:3} and \ref{thm:6}.

Theorem \ref{thm:5} (as well as the more general version in Theorem \ref{FLT}) was first obtained by A.~Mimica in 
\cite{mim}. Furthermore, he also proved the converse of the theorem and treats the case $\alpha = 2$. While Mimica
applied the method of characteristic functions, our approach is different and uses superposition in the Skorohod space.

\section{Preliminaries}
\subsection{Finite range random walks}
Let $S=(S_n : n \in \mathbb{N})$ be a random walk driven by a probability measure $p$ and let 
$p^{(n)} = p(\: \cdot \:, n)$ be its $n$-fold convolution. We say that $(S,p)\in \mathscr{S}$ if the following
properties hold
\begin{enumerate}
	\item $p$ is supported by a finite set $\mathcal{V}\subset \ZZ ^d$;
	\item for each $x \in \ZZ^d$ there is $n \in \NN$ such that $p(x, n) > 0$;
	\item $\sum_{v \in \ZZ^d} p(v) \cdot v=0$.
\end{enumerate}
Generally, a random walk from the class $\mathscr{S}$ is not aperiodic. Thus, we let $r$ to be its period, that is 
the greatest common divisor of all $n$ such that $p(0, n) > 0$.
Then the space $\ZZ^d$ decomposes into $r$ disjoint classes $R_0, \ldots, R_{r-1}$ where
\[
	R_j = \big\{x \in \ZZ^d : p(x, j + kr) > 0 \text{ for some } k \geq 0\big\}.
\]
For instance, if $S$ is the simple random walk starting at the origin, we have $r=2$ and $R_0$
is the class of points which $S$ visits in even number of steps.

Before we state the asymptotic formula for $p(x, n)$ we need to introduce a quadratic form on $\RR^d$ defined by
\begin{equation}
	\label{eq:41}
	\sprod{Q u}{u} = \sum_{v \in \ZZ^d} p(v) \sprod{v}{u}^2.
\end{equation}
We note that for the simple random walk $Q = d^{-1} I$ where $I$ is the identity matrix. Let $Q^{-1}$ be the inverse
of $Q$. We also fix a norm on $\RR^d$, $\norm{x}^2 = \sprod{Q^{-1} x}{x}$. By \cite[Corollary 3.2 and Remark 1]{tr1},
we obtain that for every $j \in \{0, \ldots, r-1\}$, $n \in \NN$ and $x \in R_j$ such that $\norm{x} \leq n^{2/3}$,
if $n \equiv j \pmod r$ then
\begin{equation}
	\label{eq:35}
	p(x, n) = 
	(2\pi n)^{-\frac{d}{2}}
	(\det Q)^{-\frac{1}{2}}
	e^{-\frac{1}{2n} \sprod{Q^{-1} x}{x}}
	\Big(r + \calO\big(n^{-1}\big) + \calO\big(n^{-2} \norm{x}^3\big)\Big)
\end{equation}
otherwise $p(x, n) = 0$. In particular, we obtain
\begin{corollary}
	\label{cor:1}
	For every $\epsilon > 0$, and $x \in \ZZ^d$,
	\[
		\sum_{j = 0}^{r-1}
		p(x, n+j) = (2\pi n)^{-\frac{d}{2}}(\det Q)^{-\frac{1}{2}} e^{-\frac{1}{2n}\sprod{Q^{-1} x}{x}}
		\Big(r + \calO\big(n^{-1}\big) + \calO\big(n^{-2} \norm{x}^3\big)\Big)
	\]
	provided $\norm{x} \leq n^{2/3}$.
\end{corollary}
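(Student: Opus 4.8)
The plan is to reduce the sum over $j$ to a single summand and then transfer the local limit formula \eqref{eq:35} from the shifted time to $n$. First I would use the cyclic decomposition $\ZZ^d = R_0 \cup \cdots \cup R_{r-1}$: a given $x$ lies in exactly one class $R_{j_0}$, and by \eqref{eq:35} we have $p(x, m) = 0$ whenever $m \not\equiv j_0 \pmod r$ (and $\norm{x} \le m^{2/3}$). As $j$ runs through $\{0, \ldots, r-1\}$ the times $n + j$ run through $r$ consecutive integers, so precisely one of them, say $m = n + j^*$ with $0 \le j^* \le r - 1$, satisfies $m \equiv j_0 \pmod r$; all other terms vanish. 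Hence $\sum_{j=0}^{r-1} p(x, n+j) = p(x, m)$, and since $m \ge n$ we still have $\norm{x} \le n^{2/3} \le m^{2/3}$, so \eqref{eq:35} applies at time $m$ and yields
\[
	p(x, m) = (2\pi m)^{-d/2}(\det Q)^{-1/2} e^{-\frac{1}{2m}\sprod{Q^{-1}x}{x}}\big(r + \calO(m^{-1}) + \calO(m^{-2}\norm{x}^3)\big).
\]

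Next I would replace $m$ by $n$ everywhere, using $m = n + \calO(1)$ (with $r$, hence $j^*$, fixed). The prefactor transfers immediately, $(2\pi m)^{-d/2} = (2\pi n)^{-d/2}(1 + \calO(n^{-1}))$, and likewise $\calO(m^{-1}) = \calO(n^{-1})$ and $\calO(m^{-2}\norm{x}^3) = \calO(n^{-2}\norm{x}^3)$ since $m \asymp n$. The only delicate factor is the Gaussian, where I write
\[
	e^{-\frac{1}{2m}\sprod{Q^{-1}x}{x}} = e^{-\frac{1}{2n}\sprod{Q^{-1}x}{x}} \exp\Big(\tfrac{1}{2}\big(\tfrac{1}{n} - \tfrac{1}{m}\big)\norm{x}^2\Big),
\]
and I must control the correction exponent $\tfrac{1}{2}(\tfrac1n - \tfrac1m)\norm{x}^2 = \tfrac{j^*}{2nm}\norm{x}^2$.

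The main point, and the only real subtlety, is that this exponent is absorbed by the stated errors. Since $m \ge n$ we have $\tfrac{j^*}{2nm}\norm{x}^2 \le \tfrac{r}{2n^2}\norm{x}^2$; bounding $\norm{x}^2 \le \norm{x}^3$ for $\norm{x} \ge 1$ (and noting the contribution is $\calO(n^{-2}) \subseteq \calO(n^{-1})$ for $\norm{x} < 1$) shows the exponent is $\calO(n^{-2}\norm{x}^3) + \calO(n^{-1})$. Here the hypothesis $\norm{x} \le n^{2/3}$ is essential: it forces $n^{-2}\norm{x}^3 = \calO(1)$, so the exponent stays bounded and the exponential linearizes, giving $\exp(\calO(n^{-2}\norm{x}^3) + \calO(n^{-1})) = 1 + \calO(n^{-2}\norm{x}^3) + \calO(n^{-1})$.

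The remaining work is bookkeeping: multiplying out the three factors $1 + \calO(n^{-1})$, $1 + \calO(n^{-2}\norm{x}^3) + \calO(n^{-1})$, and $r + \calO(n^{-1}) + \calO(n^{-2}\norm{x}^3)$, I would check that every cross term collapses back into $\calO(n^{-1}) + \calO(n^{-2}\norm{x}^3)$. This again uses $n^{-2}\norm{x}^3 = \calO(1)$ (so that, for instance, $(n^{-2}\norm{x}^3)^2 = \calO(n^{-2}\norm{x}^3)$) together with $\norm{x}^3 \le n^2$ (so that $n^{-3}\norm{x}^3 = \calO(n^{-1})$). Collecting all factors then produces exactly $(2\pi n)^{-d/2}(\det Q)^{-1/2} e^{-\frac{1}{2n}\sprod{Q^{-1}x}{x}}\big(r + \calO(n^{-1}) + \calO(n^{-2}\norm{x}^3)\big)$, which is the claimed identity.
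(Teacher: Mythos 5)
Your proof is correct and is essentially the argument the paper leaves implicit (Corollary \ref{cor:1} is introduced there with ``in particular, we obtain'' and no written proof): by the periodicity decomposition exactly one summand $p(x,n+j^*)$ survives, and the shift from $n+j^*$ to $n$ in the prefactor, the Gaussian, and the error terms is absorbed into $\calO(n^{-1})+\calO(n^{-2}\norm{x}^3)$ precisely because $\norm{x}\leq n^{2/3}$ forces $n^{-2}\norm{x}^3=\calO(1)$. Nothing to add.
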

Let us comment about the asymptotic of $p(x, n)$. For aperiodic random walks with mean zero to get \eqref{eq:35}
one may use \cite[Theorem 2.3.11]{law}. A more refined asymptotic is recently developed by Trojan in \cite{tr1}.
It is valid in a larger region and applicable to random walks with non-zero mean and not necessary aperiodic.

\subsection{Discrete time subordination}
\label{sub:3}
For continuous time Markov processes, subordination is a useful procedure of obtaining new Markov processes.
The later may differ very much from the original one but its properties can be understood in terms
of the old process. For example, in this way the symmetric stable process can be obtained from the Brownian motion
(see e.g. \cite{b}).

From a probabilistic point of view, a new process $(Y_{t} : t \geq 0)$ is obtained from the 
process $(X_{t} : t \geq 0)$ by setting $Y_{t}=X_{\sigma _{t}}$ where the \emph{subordinator}
$(\sigma _{s} : s \geq 0)$ is a non-decreasing L\'{e}vy process taking values in $(0, \infty)$ and
independent of $(X_{t} : t \geq 0)$ (see e.g. \cite[Section X.7]{feller}). From an analytical point of
view, the transition function $h_{\sigma}(x,B, t)$ of the new process is obtained as a time
average of the transition function $h(x, B, t)$ of the original one, i.e.
\begin{equation*}
	h_{\sigma }(x, B, t)
	=\int_{[0, \infty)} h(x, B, s) {\: \rm d} \nu_{t}(s)
\end{equation*}
where $\nu _{t}$ is the distribution function of the random variable $\sigma _{t}$.
Subordination was introduced by Bochner in the context of
semigroup theory (see \cite[footnote, p. 347]{feller}). Formally, the minus infinitesimal
generator $\mathcal{B}$ of the process $(Y_{t} : t \geq 0)$ is a function of the minus
infinitesimal generator $\mathcal{A}$ of the process $(X_{t} : t \geq 0)$, that is, 
$\mathcal{B=\psi (A)}$ for some function $\psi$.

A discrete time version of subordination, where the functional calculus equation 
$\mathcal{B}=\psi (\mathcal{A})$ serves as the defining starting point, has been
considered by Bendikov and Saloff-Coste in \cite{bs}. To describe the procedure, let $R$ be the operator
of convolution by $p$, a probability density on $\ZZ^d$. Then $L=I-R$, where $I$ is the identity operator, can be
considered as minus the Markov generator of the associated random walk. For a proper function $\psi$ we may define a
\emph{subordinate random walk} $S^\psi$ as the random walk process with the generator
$-\psi (L)$. As it has been shown in \cite{bs}, the appropriate class consists of Bernstein functions.

Let us recall that a smooth and non-negative function $\psi: [0, \infty) \rightarrow [0, \infty)$ is called 
a Bernstein function if for all $n \in \NN$ and $x > 0$
$$
(-1)^n \psi^{(n)}(x) \leq 0.
$$
For a Bernstein function $\psi$ there are constants $a, b \geq 0$ and a measure $\nu$ on
$[0, \infty)$ satisfying 
$$
\int_{[0, \infty)} \min\{1, s\} {\: \rm d}\nu(s) < \infty,
$$
and such that
\begin{equation}
	\label{eq:40}
	\psi(x) = a + b x + \int_{[0, \infty)} (1 - e^{-x s}) {\: \rm d}\nu(s).
\end{equation}
Given a Bernstein function $\psi$ with $\psi(0) = 0$ and $\psi(1) = 1$ for $k \in \NN$ we set
\begin{equation}
	\label{eq:3}
	c(\psi, k) = \frac{1}{k!}\int_{[0, \infty)} t^k e^{-t} {\: \rm d}\nu(t) + 
	\begin{cases}
		b & \text{ if } k = 1,\\
		0 & \text{ otherwise}.
	\end{cases}
\end{equation}
Then the random walk $S^\psi$ with the generator $-\psi(L)$ has the probability density
\begin{equation}
	\label{eq:1}
	p_\psi(x) = \sum_{k \geq 1} p(x, k) c(\psi, k),
\end{equation}
see \cite[Proposition 2.3]{bs}. Formula \eqref{eq:1} has the following probabilistic
interpretation. Let $(R_k : k \in \NN)$ be
a sequence of independent and identically distributed integer-valued random variables also independent
of $S$ and such that $\mathbb{P} (R_i=k)=c(\psi, k)$. Then for $\tau _n=R_1+\ldots +R_n$ we have
\begin{equation}
	\label{eq:4}
	\mathbb{P} (\tau _n=k)
	=\sum _{k_1+\ldots +k_n=k} 
	c(\psi, k_1) \cdots c(\psi, k_n).
\end{equation}
In particular, the law of $S^\psi_n$ is the same as the law of $S_{\tau_n}$ and is given by
\begin{equation}
	\label{eq:6}
	p_\psi(x, n)
	=
	\sum _{k\geq n} p(x, k)
	\mathbb{P} (\tau _n=k).
\end{equation}

\subsection{Asymptotics related to subordinators}
\label{sec:3}
For a given $(S, p) \in \mathscr{S}$ and a Bernstein function $\psi$, the relations \eqref{eq:4} and \eqref{eq:6}
defining the transition function $p_\psi (x, n)$ of the subordinate random walk $S^\psi$ are rather complicated.
In the course of study, we are going to restrict ourselves by considering Bernstein functions $\psi$
\emph{regularly varying} at zero of index $0 < \alpha/2 < 1$, such that $\psi(0) = 0$ and $\psi(1)=1$.

Let us recall that a function $f$ defined in an interval $(0, a)$, $a>0$, is \emph{regularly varying} of index $\beta $
at zero if $f(x)= x^\beta \ell (1/x) $ where $\ell$ satisfies
\begin{equation*}
	\lim_{x \to \infty} \frac{\ell(\lambda x)}{\ell(x)} = 1,
\end{equation*}
for each $\lambda >0$. The function $\ell$ is called \emph{slowly varying} at infinity. For the detailed exposition
of the theory of regular variation we refer the reader to \cite{bgt}. Here, we only mention \emph{Potter bounds}
(see \cite{pot}, see also \cite[Theorem 1.5.6]{bgt}). If $\ell$ varies slowly at infinity then for
every $\epsilon > 0$ and $C > 1$ there exists $x_0 > 0$ such that for all $x ,y \geq x_0$ 
\begin{equation} 
	\label{eq:22}
	\ell(y) \leq C\, \ell(x) \max\{y/x, x/y\}^\epsilon.
\end{equation}
The main result of this section is the following theorem. 
\begin{theorem}\label{thm:2} 
	Set $F_n(t)=\mathbb{P} (\tau _n \leq t)$. If $n$ and $t$ both tend to infinity in such a way that
	$n\psi\big(t^{-1}\big)$ tends to zero then
	\begin{align*}
		1-F_n(t) \sim \frac{n \psi\big(t^{-1}\big)}{%
		\Gamma(1-\alpha/2)} .
	\end{align*}
\end{theorem}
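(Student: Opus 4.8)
The plan is to reduce the statement first to the tail of a single increment and then to a large--deviation estimate for the sum.

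\emph{Step 1: the single step.} I would begin by identifying the law of $R_1$ through its transform. Using \eqref{eq:3} together with the representation \eqref{eq:40} (where $a = \psi(0) = 0$), a direct computation gives, for $0 \le z \le 1$,
\begin{equation*}
	\EE\big(z^{R_1}\big) = \sum_{k \ge 1} c(\psi, k) z^k = 1 - \psi(1 - z),
\end{equation*}
so that $\EE\big(e^{-\lambda R_1}\big) = 1 - \psi(1 - e^{-\lambda})$ and hence $\EE\big(e^{-\lambda \tau_n}\big) = \big(1 - \psi(1 - e^{-\lambda})\big)^n$. Since $1 - e^{-\lambda} \sim \lambda$ as $\lambda \to 0^+$ and $\psi$ is regularly varying at zero of index $\alpha/2$, the deficit $1 - \EE\big(e^{-\lambda R_1}\big) = \psi(1 - e^{-\lambda}) \sim \psi(\lambda)$ is regularly varying of index $\alpha/2 \in (0,1)$. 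Karamata's Tauberian theorem and the monotone density theorem (see \cite{bgt}) then yield $G(t) := \PP(R_1 > t) \sim \psi(1/t)/\Gamma(1 - \alpha/2)$ as $t \to \infty$. In particular $n\psi(t^{-1}) \to 0$ is equivalent to $nG(t) \to 0$, and the theorem becomes the single--big--jump asymptotic $1 - F_n(t) = \PP(\tau_n > t) \sim n G(t)$.

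\emph{Step 2: lower bound.} Because each $R_i \ge 1$, the event that one increment already exceeds $t$ forces $\tau_n > t$, so by Bonferroni's inequality
\begin{equation*}
	\PP(\tau_n > t) \ge \PP\Big(\bigcup_{i=1}^n \{R_i > t\}\Big) \ge n G(t) - \binom{n}{2} G(t)^2 \ge n G(t)\big(1 - \tfrac{1}{2} n G(t)\big),
\end{equation*}
and since $n G(t) \to 0$ the right-hand side equals $n G(t)(1 + o(1))$.

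\emph{Step 3: upper bound.} This is the crux. Fix $\epsilon \in (0,1)$ and split according to the largest increment, writing $\PP(\tau_n > t) \le n\, \PP\big(R_1 > (1-\epsilon)t\big) + \PP\big(\tau_n > t,\ \max_{i \le n} R_i \le (1-\epsilon)t\big)$. By Step 1 the first term is $(1-\epsilon)^{-\alpha/2} n G(t)(1 + o(1))$. For the second I would truncate, setting $\overline{R}_i = R_i \wedge (1-\epsilon)t$, so that on the relevant event $\tau_n = \sum_i \overline{R}_i$, and apply the exponential Chebyshev inequality $\PP\big(\sum_i \overline{R}_i > t\big) \le \exp\big(-ht + n\, m(h)\big)$, where the truncated moment generating exponent is
\begin{equation*}
	m(h) = \EE\big(e^{h \overline{R}_1} - 1\big) = h \int_0^{(1-\epsilon)t} e^{hs} G(s) \,{\rm d} s.
\end{equation*}
Using the regular variation of $G$ through the Potter bounds \eqref{eq:22}, one estimates $m(h) \asymp e^{h(1-\epsilon)t} G\big((1-\epsilon)t\big)$ once $h(1-\epsilon)t \to \infty$. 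Optimising the exponent leads to the logarithmically large choice $h \approx \big((1-\epsilon)t\big)^{-1} \log\big(1/(nG(t))\big)$, for which $-ht + n\,m(h) = -\tfrac{1}{1-\epsilon}\log\tfrac{1}{nG(t)} + \calO(1)$; hence the truncated-sum probability is $\calO\big((nG(t))^{1/(1-\epsilon)}\big) = o(nG(t))$, since $1/(1-\epsilon) > 1$. Combining both terms and letting $\epsilon \to 0$ gives $\limsup \PP(\tau_n > t)/(n G(t)) \le 1$, which with Step 2 proves the claim.

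The main obstacle is precisely the control of $m(h)$ at this logarithmic scale of $h$: one must show, uniformly in $n$ and $t$ and using only the regular variation of $\psi$, that the medium-size increments contribute an exponent negligible compared with $\log(1/(nG(t)))$. This is the statement that no configuration other than a single increment close to $t$ survives, and it is where the choice of $h$ (large, rather than of order $1/t$) is essential; a naive bounded choice only produces a constant-order estimate, which is too weak against $nG(t) \to 0$.
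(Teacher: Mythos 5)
Your proposal is correct in outline but follows a genuinely different route from the paper. The paper never touches the single‑big‑jump decomposition: it computes the Laplace transform $\calL\{1-F_n\}(\lambda)=\lambda^{-1}\bigl(1-(1-\psi(1-e^{-\lambda}))^n\bigr)$ explicitly, introduces the integrated tail $\calF_n(x)=\int_0^x(1-F_n(y))\,{\rm d}y$, and runs a Tauberian argument by hand — a family of tempered distributions $\Lambda_n$ built from $\calF_n(t_n\,\cdot\,)$ is shown to be equicontinuous, its limit is identified on the exponentials $e^{-\tau x}$ and extended by density, and a final monotone‑density step (comparing $\calF_n(t)$ with $\calF_n((1\pm\epsilon)t)$) converts the integrated asymptotics into the tail asymptotics. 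Your argument instead reduces everything to the increment tail $G(t)\sim\psi(1/t)/\Gamma(1-\alpha/2)$ (your Step 1 is exactly the paper's Lemma 4.2, via Karamata plus monotone density, and your constant is the correct one) and then proves $\PP(\tau_n>t)\sim nG(t)$ by Bonferroni from below and by truncation plus exponential Chebyshev from above; this is the Nagaev‑type large‑deviation proof that the paper itself alludes to when citing \cite{doney,nag}. What your route buys is generality and probabilistic transparency: it works for any i.i.d.\ positive increments with regularly varying tails of index $-\alpha/2\in(-1,0)$, with no use of the product structure of the transform. What it costs is that the entire difficulty is concentrated in the bound $n\,m(h)=\calO(1)$ for $h=((1-\epsilon)t)^{-1}\log\bigl(1/(nG(t))\bigr)$, which you state but do not verify. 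That bound does hold, but two separate checks are needed: the contribution to $m(h)$ from small $s$ (where $G(s)\approx 1$) is of order $h$, so one must verify $nh\to 0$, which follows from $n\log t/t\to 0$, itself a consequence of $n\psi(t^{-1})\to 0$ and $\alpha/2<1$; and the contribution near the truncation point is $\asymp e^{h(1-\epsilon)t}G((1-\epsilon)t)=n^{-1}G((1-\epsilon)t)/G(t)$ by Potter bounds, using that the exponent $\alpha/2+\delta$ stays below $1$ so that $A\int_0^1 e^{Au}u^{-\alpha/2-\delta}\,{\rm d}u\sim e^{A}$. With these two estimates written out, your upper bound $\calO\bigl((nG(t))^{1/(1-\epsilon)}\bigr)=o(nG(t))$ is justified and the proof closes.
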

\begin{proof}
	The statement of the theorem may be expressed as follows: for any sequence $(t_n : n \in \NN)$ tending to
	infinity and satisfying 
	\begin{equation}
		\label{eq:2}
		\lim_{n \to \infty} n \psi\big(t_n^{-1}\big) = 0,
	\end{equation}
	one has
	\begin{equation}
		\label{eq:11}
		\lim_{n \to \infty} \frac{1-F_n(t_n)}{n \psi\big(t_n^{-1}\big)} =\frac{1}{\Gamma(1-\alpha/2)}.
	\end{equation}
	Let us observe that the Laplace transform of the function $F_1$ is equal to
	\begin{equation}
		\label{eq:23} 
		\calL\{F_1\}(\lambda) = \int_0^\infty F_1(y) e^{-\lambda y} {\: \rm d}y
		=
		\lambda^{-1} \big(1 - \psi\big(1 - e^{-\lambda} \big)\big).
	\end{equation}
	Hence, for any $n \in \NN$ 
	\begin{equation}
		\label{eq:18}
		\calL\{1 - F_n\}(\lambda) 
		= \lambda^{-1} \big(1 - \big(1 - \psi\big(1 - e^{-\lambda}\big)\big)^n\big).
	\end{equation}
	Next, we define
	\begin{equation*}
		\calF_n(x) = \int_0^x 1 - F_n(y) {\: \rm d} y,
	\end{equation*}
	for $x > 0$. Then, by \eqref{eq:18}
	\begin{align}
		\calF_n(x) 
		= \int_0^x {\: \rm d}\calF_n(y) &\leq e \int_0^x e^{-y/x} {\: \rm d} \calF_n(y) \nonumber\\
		& \leq e \calL\{{\rm d} \calF_n\}(1/x) \nonumber\\ 
		& \leq C x n \psi\big(1 - e^{-1/x}\big).\label{eq:7}
	\end{align}
	In what follows $C$ denotes an absolute positive constant which may vary form line to line.

	For a given sequence $(t_n : n \in \NN)$ satisfying \eqref{eq:2} we define a sequence of tempered
	distribution on $[0,\infty )$ by setting
	\begin{equation*}
		\Lambda_n(f) = \frac{1}{n t_n \psi\big(t_n^{-1}\big)}
		\int _0^\infty f(x) \calF_n(t_n x){\: \rm d}x
	\end{equation*}
	for any $f \in \calS\big([0,\infty )\big)$. Let us recall that the space $\mathcal{S}\big([0,\infty )\big)$
	consists of Schwartz functions on $\mathbb{R}$ restricted to $[0,\infty )$ and
	$\calS^\prime \big([0,\infty )\big)$ consists of tempered distributions supported by $[0,\infty )$,
	see \cite{vlad} for details.

	We claim that there is $C > 0$ such that for any $f \in \calS\big([0,\infty )\big)$ and $n \in \NN$
	\begin{equation} 
		\label{eq:8}
		\abs{\Lambda_n (f)} 
		\leq C \sup _{ x \geq 0} \big\{(1 + x)^3 {\lvert {f(x)} \rvert} \big\}.
	\end{equation}
	Indeed, by \eqref{eq:7} we have
	\begin{equation*}
		|\Lambda_n (f)|
		\leq 
		\frac{C}{\psi\big(t_n^{-1}\big)} 
		\int_0^\infty x {\lvert {f(x)} \rvert} \psi\big(1 - e^{-1/(t_n x)}\big) {\: \rm d}x.
	\end{equation*}
	To bound the right-hand side, let us recall that $\psi(\lambda) = \lambda^{\alpha/2} \ell(1 / \lambda)$ for some
	$0 < \alpha < 2$. By \eqref{eq:22}, there are $N \geq 1$ and $C > 0$ such that for $n \geq N$
	\[
		\psi\big(t_n^{-1}\big) \geq C t_n^{-(\alpha/2+1)/2}.
	\]
	Thus, we can estimate 
	\begin{align*}
		\frac{1}{\psi\big(t_n^{-1}\big)} 
		\int_0^{t_n^{-1/2}} x {\lvert {f(x)} \rvert} \psi\big(1 - e^{-1/(t_n x)}\big) {\: \rm d} x
		& \leq C \sup_{x \geq 0} \big\{{\lvert {f(x)} \rvert} \big\} t_n^{(\alpha/2+1)/2} 
		\int_0^{t_n^{-1/2}} x {\: \rm d}x \\
		& \leq C \sup _{ x \geq 0}\big\{{\lvert {f(x)} \rvert} \big\}. 
	\end{align*}
	Let $x_n = - 1 / \big(t_n \cdot \log \big(1 - 1/t_n\big)\big)$ and 
	\begin{equation*}
		A_n(x) = t_n \big(1 - e^{-1/(t_n x)}\big).
	\end{equation*}
	For $t_n^{-1/2} \leq x \leq x_n$ we have 
	\[
		1 \leq A_n(x) \leq t_n^{1/2}.
	\]
	Again, by \eqref{eq:22}, there is $N \geq 1$ such that for $n \geq N$ 
	\[
		\ell\big(A_n(x)^{-1} t_n \big) \leq \ell(t_n) A_n(x)^{(1-\alpha/2)/2},
	\]
	thus
	\begin{equation*}
		\frac{\psi\big(A_n(x)\cdot t_n^{-1}\big)} {A_n(x) \psi\big(t_n^{-1}\big)}
		\leq A_n(x)^{(\alpha/2-1)/2} \leq 1.
	\end{equation*}
	Since $x A_n(x) \leq 1$ whenever $x > 0$, we obtain 
	\begin{align*}
		\frac{1}{\psi\big(t_n^{-1}\big)} \int_{t_n^{-1/2}}^{x_n} x 
		{\lvert {f(x)} \rvert} \psi\big(A_n(x) \cdot t_n^{-1}\big) {\: \rm d}x 
		&\leq \int_{t_n^{-1/2}}^{x_n} x {\lvert {f(x)} \rvert} A_n(x) {\: \rm d}x\\
		&\leq C \sup _{x\geq 0} \big\{ (1 + x)^2 {\lvert {f(x)} \rvert}  \big\}.
	\end{align*}
	Finally, if $x\geq x_n$ then $A_n(x) \leq 1$. Therefore, by monotonicity we get 
	\begin{equation*}
		\psi\big(A_n(x) \cdot t_n^{-1}\big) \leq \psi\big(t_n^{-1}\big).
	\end{equation*}
	Hence, 
	\begin{align*}
		\frac{1}{\psi\big(t_n^{-1}\big)}
		\int_{x_n}^\infty x {\lvert {f(x)} \rvert}
		\psi\big(A_n(x) \cdot t_n^{-1}\big) {\: \rm d}x 
		& \leq \int_{x_n}^\infty x {\lvert {f(x)} \rvert} {\: \rm d}x \\
		& \leq C \sup _{x\geq 0} \big\{ (1 + x)^3 {\lvert {f(x)} \rvert} \big\},
	\end{align*}
	which finishes the proof of the claim \eqref{eq:8}.

	Let us observe, that \eqref{eq:8} implies that the family of distributions $(\Lambda_n : n \in \NN)$ is
	equicontinuous. Next, we calculate $\Lambda_n(f_\tau)$ for $f_\tau(x) = e^{-\tau x}$, $\tau > 0$. 
	The integration by parts yields
	\begin{equation}
		\label{eq:9}
		\Lambda_n (f_{\tau}) 
		= \frac{1}{n t_n\psi\big(t_n^{-1}\big)}
		\int_0^\infty e^{-\tau x} \calF_n(t_n x) {\: \rm d}x
		= \frac{1}{n \tau t_n \psi\big(t_n^{-1} \big)} \calL\{{\rm d}\calF_n\}(\tau/t_n),
	\end{equation}
	We need the following observation. 
	\begin{claim}
		\label{cl:1}
		Let $(a_n : n \in \NN)$ and $(b_n : n \in \NN)$ be any two sequences
		of positive numbers both tending to zero and such that
		\[
			\lim_{n \to \infty} \frac{a_n}{b_n} = 1.
		\]
		Then
		\[
			\lim_{n \to \infty} \frac{\psi(a_n)}{\psi(b_n)} = 1.
		\]
	\end{claim}
	Indeed, by \eqref{eq:22}, for every $A > 1$ and $\epsilon > 0$ there is $\delta > 0$ such that for all
	$n \in \NN$, if $0 < a_n, b_n < \delta$ then
	\[
		\psi(a_n) \leq A \psi(b_n)
		\max\big\{
		(a_n/ b_n)^{\frac{\alpha}{2} + \epsilon},
		(a_n/b_n)^{\frac{\alpha}{2} - \epsilon}
		\big\}.
	\]
	Hence,
	\[
		\lim_{n \to \infty} \frac{\psi(a_n)}{\psi(b_n)} \leq A,
	\]
	and since $A > 1$ was arbitrary, the limit is bounded by $1$. Analogously, we show the reverse inequality.

	Now, since
	\[
		\lim_{n \to \infty} \frac{1 - e^{-\tau/t_n}}{\tau/t_n} = 1,
	\]
	by Claim \ref{cl:1}, we get
	\[
		\lim_{n \to \infty} \frac{\psi\big(1 - e^{-\tau/t_n}\big)}{\psi(t_n^{-1})} = \tau^{\alpha/2}.
	\]
	Hence, by \eqref{eq:18} and \eqref{eq:9}
	\begin{align}
		\label{eq:17}
		\lim_{n \to \infty} \Lambda_n (f_{\tau}) =\tau^{\alpha/2 - 2} =\frac{1}{%
		\Gamma(2 - \alpha/2)} \int_0^\infty e^{-\tau x} x^{1-\alpha/2} {\: \rm d}x.
	\end{align}
	Finally, density of the linear span $\calT$ of the set $\{f_\tau : \tau > 0\}$ and equicontinouity of 
	$(\Lambda_n : n \in \NN)$ allows us to extend the limit in \eqref{eq:17} over all of
	$f \in \mathcal{S} \big([0,\infty )\big)$ giving
	\begin{align*}
		\lim_{n \to \infty} \Lambda_n (f) 
		= \frac{1}{\Gamma(2 - \alpha/2)} \int_0^\infty f(x) x^{1-\alpha/2} {\: \rm d}x.
	\end{align*}
	
	For the completeness of our presentation we give a sketch of the proof that
	$\calT$ is dense in $\calS\big([0, \infty)\big)$ (see e.g.
	\cite{vlad, vdz}). By the Hahn--Banach theorem it is enough to show that if
	$\Lambda \in \mathcal{S}^\prime \big([0,\infty )\big)$ and $\Lambda (\phi)=0$ for all
	$\phi \in \calT$, then $\Lambda$ is the zero functional.
	Assume that $\Lambda $ vanishes on $\calT$ and let $\mathcal{L} \Lambda (z)
	=\Lambda (e^{-z\cdot})$, $\Re z >0$, be the Laplace transform of $\Lambda$.
	Since $\Lambda =0$ on $\calT$ we get $\mathcal{L} \Lambda (\lambda )=0$ for $\lambda >0$.
	But the Laplace transform is analytic in the half-plane $\Re z>0$, whence
	$\mathcal{L} \Lambda(z)=0$, for $\Re z>0$. Using the connection between Laplace and
	Fourier transforms, 
	\begin{align*}
		\widehat{\Lambda}(\xi)=\lim_{\lambda \to 0^+}\mathcal{L} \Lambda (\lambda +i\xi),
		\quad \mathrm{in}\ \mathcal{S}^\prime \big([0,\infty )\big),
	\end{align*}
	we obtain that $\widehat{\Lambda}=0$, i.e. $\Lambda $ is the zero functional as desired.

	Next, we claim that 
	\begin{equation}
		\label{eq:10}
		\lim_{n \to \infty} \frac{\calF_n(t_n)}{n t_n \psi\big(t_n^{-1}\big)}
		= \frac{1}{\Gamma(2-\alpha/2)}.
	\end{equation}
	For a fixed $\epsilon > 0$, we choose $\phi_+ \in \mathcal{S}\big([0,\infty )\big)$
	such that $0 \leq \phi_+ \leq 1$ and 
	\begin{equation*}
		\phi_+(x) = 
		\begin{cases}
			1 & \text{ for } 0 \leq x \leq 1, \\ 
			0 & \text{ for } 1+\epsilon \leq x.%
		\end{cases}
	\end{equation*}
	Then we have 
	\begin{align*}
		\calF_n(t_n) = \int_0^{t_n} {\: \rm d} \calF_n(s)
		\leq \int_0^{t_n} \phi_+(s/t_n) {\: \rm d} \calF_n(s) 
		\leq \int_0^\infty \phi_+(s/t_n) {\: \rm d}\calF_n(s).
	\end{align*}
	Hence, by the integration by parts we get 
	\begin{equation*}
		\frac{\calF_n(t_n)}{n t_n \psi\big(t_n^{-1}\big)} \leq -\Lambda_n (\phi_+^{\prime }).
	\end{equation*}
	Therefore, we may estimate 
	\begin{align*}
		\limsup_{n \to \infty} \frac{\calF_n(t_n)}{n t_n \psi\big(t_n^{-1}\big)} 
		&\leq \frac{-1}{\Gamma(2-\alpha/2)} \int_0^{\infty} s^{1 - \alpha/2} \phi^{\prime }_+(s) {\: \rm d} s \\
		&=\frac{1}{\Gamma(1-\alpha/2)} \int_0^{\infty} s^{ - \alpha/2} \phi_+(s) {\: \rm d}s
		\leq \frac{1}{\Gamma(2 - \alpha/2)} (1+\epsilon)^{1-\alpha/2}.
	\end{align*}
	Similarly, by taking $\phi_- \in \mathcal{S}\big([0,\infty )\big)$, $0 \leq \phi_- \leq 1$ satisfying
	\begin{equation*}
		\phi_-(x) = 
		\begin{cases}
			1 & \text{ for } 0 \leq x \leq 1-\epsilon, \\ 
			0 & \text{ for } 1 \leq x,%
		\end{cases}
	\end{equation*}
	one can show that 
	\begin{equation*}
		\liminf_{n \to \infty} \frac{\calF_n(t_n)}{n t_n \psi\big(t_n^{-1}\big)} 
		\geq \frac{1}{\Gamma(2 - \alpha/2)} (1 - \epsilon)^{1 - \alpha/2}.
	\end{equation*}
	Since $\epsilon$ was arbitrary, we conclude \eqref{eq:10}.

	To prove \eqref{eq:11} we follow the line of reasoning from \cite[Theorem 1.7.2]{bgt}.
	Let $\epsilon > 0$. The function $1-F_n(s)$ is non-increasing therefore
	\begin{equation}
		\label{eq:12}
		\calF_n(t) - \calF_n((1-\epsilon) t) = \int_{(1-\epsilon)t}^{t} (1-F_n(s)) {\: \rm d}s 
		\geq \epsilon t (1 - F_n(t))
	\end{equation}
	and 
	\begin{equation}
		\label{eq:12a}
		\calF_n((1+\epsilon)t) - \calF_n(t) = \int^{(1+\epsilon)t}_t (1-F_n(s)) {\: \rm d} s
		\leq \epsilon t (1 - F_n(t)).
	\end{equation}
	By \eqref{eq:10} and regular variation of $\psi$,  
	\begin{equation*}
		\lim_{n \to \infty} \frac{\calF_n ((1-\epsilon) t_n)}{n t_n \psi\big(t_n^{-1}\big)} 	
		=\frac{(1-\epsilon)^{1-\alpha/2}}{\Gamma(2 - \alpha/2)}.
	\end{equation*}
	Hence, \eqref{eq:12} implies 
	\begin{equation*}
		\limsup_{n \to \infty} \frac{1 - F_n(t_n)}{n \psi\big(t_n^{-1}\big)}
		\leq 
		\frac{1}{\Gamma(2 - \alpha/2)} \cdot \frac{1 - (1-\epsilon)^{1-\alpha/2}}{\epsilon}.
	\end{equation*}
	Similarly, by \eqref{eq:12a} we get 
	\begin{equation*}
		\liminf_{n \to \infty} \frac{1 - F_n(t_n)}{n \psi\big(t_n^{-1}\big)} 
		\geq
		\frac{1}{\Gamma(2 - \alpha/2)} \cdot \frac{(1+\epsilon)^{1-\alpha/2} -1}{\epsilon}.
	\end{equation*}
	Finally, by taking $\epsilon$ tending to zero we obtain \eqref{eq:11}.
\end{proof}

\section{The domain of attraction}
Let  $(X_n : n \in \NN)$ be a sequence of independent identically distributed 
random variables with common distribution $\mu$. The distribution $\mu$ belongs to the domain of attraction
of the $\alpha$-stable law if for every $n \in \NN$ there are $a_n \in \RR^d$ and $b_n \in (0, \infty)$ such that
the random variable
\[
	\big( \sum_{j = 1}^n X_j - a_n \big)/b_n
\]
converges in law to $\alpha$-stable random variable. As in the Section \ref{sec:3} we assume that $\psi$
is the Bernstein function regularly varying at zero of index $\alpha/2$, $0 < \alpha < 2$, such that $\psi(0) = 0$
and $\psi(1)=1$. The main result of this subsection is the following theorem.
\begin{theorem}
	\label{domain}
	The law of $S_1^\psi$ belongs to the domain of attraction of the symmetric
	$\alpha$-stable law. 
\end{theorem}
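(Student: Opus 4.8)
The plan is to pass to characteristic functions, where the subordination structure becomes completely transparent. Writing $\hat p(\xi) = \sum_{v} p(v) e^{i \sprod{v}{\xi}}$ for the symbol of the convolution operator $R$, the defining functional calculus identity $I - P_\psi = \psi(I - R)$ becomes, in Fourier variables,
\[
	\phi(\xi) := \sum_{x} p_\psi(x) e^{i \sprod{x}{\xi}} = 1 - \psi\big(1 - \hat p(\xi)\big),
\]
where $\psi$ is extended to $\{\operatorname{Re} z \geq 0\}$ through \eqref{eq:40}. The same identity may be read off \eqref{eq:1} since $\sum_{k \geq 1} c(\psi, k) z^k = 1 - \psi(1 - z)$ (a short check using \eqref{eq:40} and $\psi(0)=0$, $\psi(1)=1$; cf. \cite{bs}). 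Because $\sum_{j=1}^n X_j$, for $X_j$ i.i.d.\ with law $p_\psi$, has the same law as $S^\psi_n = S_{\tau_n}$, its characteristic function is $\phi^n$, so it suffices to exhibit $b_n \to \infty$ for which $\phi(\xi/b_n)^n$ converges to the characteristic function of a symmetric $\alpha$-stable law.

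Next I would expand $\hat p$ at the origin. By property (i) of $\mathscr{S}$ the symbol $\hat p$ is a trigonometric polynomial, and by property (iii) its linear term vanishes, so by \eqref{eq:41}
\[
	1 - \hat p(\eta) = \tfrac12 \sprod{Q \eta}{\eta} + \calO\big(\abs{\eta}^3\big), \qquad \eta \to 0,
\]
with real part $\tfrac12 \sprod{Q\eta}{\eta}\big(1 + \calO(\abs{\eta})\big)$ and purely imaginary correction of order $\abs{\eta}^3$. Property (ii) forces the supporting vectors to span $\RR^d$, so $Q$ is positive definite and $\sprod{Q\xi}{\xi} > 0$ for $\xi \neq 0$. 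Hence $w_n := 1 - \hat p(\xi/b_n) = r_n(1 + \delta_n)$ with $r_n = \tfrac12 \sprod{Q\xi}{\xi} b_n^{-2} \to 0^+$, $\operatorname{Re} w_n > 0$, and $\delta_n \to 0$.

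The core step, and the main obstacle, is evaluating $\psi(w_n)$: since $w_n$ is complex, Claim \ref{cl:1} does not apply verbatim. I would instead exploit that $\psi'$ is completely monotone, whence $\abs{\psi'(z)} \leq \psi'(\operatorname{Re} z)$ for $\operatorname{Re} z > 0$. Writing $\psi(w_n) - \psi(r_n) = \int_0^1 \psi'\big(r_n(1 + t\delta_n)\big)\, r_n \delta_n \, dt$ and bounding the integrand by $\psi'(r_n/2)\, r_n \abs{\delta_n}$ for large $n$, then using the monotone density form of regular variation (so that $\psi'$ varies regularly of index $\tfrac\alpha2 - 1$ and $r \psi'(r) \sim \tfrac\alpha2 \psi(r)$, see \cite{bgt}), gives $\abs{\psi(w_n) - \psi(r_n)} \leq C \psi(r_n) \abs{\delta_n} = o(\psi(r_n))$. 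Thus $\psi(w_n) \sim \psi(r_n)$, and writing $\psi(t) = t^{\alpha/2} \ell(1/t)$ and invoking \eqref{eq:22} to absorb the fixed constant into the slowly varying factor,
\[
	1 - \phi(\xi/b_n) = \psi(w_n) \sim \big(\tfrac12 \sprod{Q\xi}{\xi}\big)^{\alpha/2}\, b_n^{-\alpha}\, \ell\big(b_n^2\big).
\]

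Finally I would fix the norming. As $\lambda \mapsto \lambda^{-\alpha} \ell(\lambda^2)$ is regularly varying of index $-\alpha$, by \cite{bgt} there is $b_n \to \infty$, regularly varying of index $1/\alpha$, with $n\, b_n^{-\alpha} \ell(b_n^2) \to 1$. For this choice $n\big(1 - \phi(\xi/b_n)\big) \to \big(\tfrac12 \sprod{Q\xi}{\xi}\big)^{\alpha/2}$, a real limit, so no centering is required ($a_n = 0$) and consequently
\[
	\phi(\xi/b_n)^n \longrightarrow \exp\Big(-\big(\tfrac12 \sprod{Q\xi}{\xi}\big)^{\alpha/2}\Big).
\]
The right-hand side equals $\EE \exp\big(-\sigma \cdot \tfrac12 \sprod{Q\xi}{\xi}\big)$ for the value $\sigma$ of the $\alpha/2$-stable subordinator, i.e.\ the characteristic function of the symmetric $\alpha$-stable law obtained by subordinating the centered Gaussian with covariance $Q$ — precisely the law of $B^\alpha_1$. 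By L\'evy's continuity theorem this establishes that the law of $S_1^\psi$ lies in the domain of attraction of that symmetric $\alpha$-stable law. The only genuinely delicate point is the control of $\psi$ on the complex arguments $1 - \hat p(\xi/b_n)$, which the complete monotonicity of $\psi'$ resolves; everything else is routine Taylor expansion and regular variation.
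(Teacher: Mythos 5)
Your proposal is correct and follows essentially the same route as the paper: pass to characteristic functions via the identity $\Phi_\psi = 1 - \psi(1-\Phi)$ (the paper's Lemma \ref{newFourier}), Taylor-expand $\hat p$ at the origin using the mean-zero and irreducibility hypotheses, choose $b_n$ with $n\psi(b_n^{-2})\to 1$, and conclude that $\Phi_\psi(\xi/b_n)^n \to \exp\big(-2^{-\alpha/2}\sprod{Q\xi}{\xi}^{\alpha/2}\big)$. The one point where you diverge is the control of $\psi$ at the complex argument $1-\hat p(\xi/b_n)$: you use complete monotonicity of $\psi'$ together with the monotone density theorem, whereas the paper's Lemma \ref{psiprop} obtains the sharper and more elementary bound $\abs{\psi(u(1+z))-\psi(u)}\le \abs{z}\,\psi(u)$ directly from the L\'evy--Khintchine representation, avoiding any appeal to the monotone density theorem; both arguments are valid.
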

We start with two elementary lemmas. Let $\Phi$ and $\Phi_\psi$ denote characteristic functions of
$p$ and $p_\psi$, respectively.
\begin{lemma}
	\label{newFourier}
	For all $\xi \in \RR^d$,
	\begin{align*}
		\Phi _{\psi}(\xi)=1-\psi (1-\Phi(\xi)).
	\end{align*}
\end{lemma}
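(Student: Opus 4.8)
The plan is to compute $\Phi_\psi$ directly from the series \eqref{eq:1}. Since $p(\:\cdot\:, k)$ is the $k$-fold convolution of $p$, its characteristic function is $\Phi^k$, so interchanging the sum over $k$ with the Fourier transform gives $\Phi_\psi(\xi) = \sum_{k \geq 1} c(\psi, k) \Phi(\xi)^k$. The lemma therefore reduces to the analytic identity $\sum_{k \geq 1} c(\psi, k) z^k = 1 - \psi(1 - z)$, which I would verify for every $z$ with $\abs{z} \leq 1$; applying it at $z = \Phi(\xi)$, a characteristic function and hence satisfying $\abs{\Phi(\xi)} \leq 1$, then yields the claim.

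To establish the identity I would insert the definition \eqref{eq:3} of $c(\psi, k)$. The constant $b$, present only for $k = 1$, contributes $bz$, while the integral terms give $\sum_{k \geq 1} \frac{z^k}{k!} \int_{[0, \infty)} t^k e^{-t} {\: \rm d}\nu(t)$. I would interchange summation and integration by Fubini's theorem: for $\abs{z} \leq 1$ the integrand is dominated by $e^{-t}(e^t - 1) = 1 - e^{-t}$, which is $\nu$-integrable because $1 - e^{-t} \leq \min\{1, t\}$ and $\nu$ is a L\'evy measure. Summing the exponential series then produces $bz + \int_{[0, \infty)} \big(e^{-(1 - z)t} - e^{-t}\big) {\: \rm d}\nu(t)$.

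It remains to match this with $1 - \psi(1 - z)$. From $\psi(0) = 0$ the constant $a$ in \eqref{eq:40} vanishes, and the normalization $\psi(1) = 1$ reads $b + \int_{[0,\infty)} (1 - e^{-s}) {\: \rm d}\nu(s) = 1$. Substituting these into \eqref{eq:40} evaluated at $1 - z$ and simplifying gives exactly $1 - \psi(1 - z) = bz + \int_{[0,\infty)} \big(e^{-(1-z)t} - e^{-t}\big) {\: \rm d}\nu(t)$, which coincides with the expression obtained in the previous step.

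The only point requiring genuine care is the pair of interchanges: the sum with the Fourier transform, and the sum with the L\'evy integral. Both are justified by the same observation, namely that summing \eqref{eq:3} over $k$ yields $\sum_{k \geq 1} c(\psi, k) = \psi(1) = 1$, so that $\big(c(\psi, k)\big)_{k \geq 1}$ is a probability distribution and the power series $\sum_k c(\psi, k) z^k$ converges absolutely whenever $\abs{z} \leq 1$. Dominated convergence then legitimizes both interchanges, and everything else is elementary manipulation of the L\'evy--Khintchine representation.
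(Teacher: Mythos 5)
Your proposal is correct and follows essentially the same route as the paper: expand $\Phi_\psi(\xi)=\sum_{k\geq 1}c(\psi,k)\Phi(\xi)^k$, prove the identity $\sum_{k\geq 1}c(\psi,k)z^k=1-\psi(1-z)$ via the L\'evy--Khintchine representation and the normalizations $\psi(0)=0$, $\psi(1)=1$, and evaluate at $z=\Phi(\xi)$. The paper states the identity for the slightly larger region $\Re z\leq 1$ (using the holomorphic extension of $\psi$ to $\Re z\geq 0$), but your restriction to $\abs{z}\leq 1$ suffices since $\abs{\Phi(\xi)}\leq 1$, and your justification of the two interchanges matches the paper's implicit use of absolute convergence.
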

\begin{proof}
	Notice that $\Phi(\xi)$ may take complex values, thus we need to use the holomorphic
	extension of the Bernstein function $\psi$. For any $z\in \mathbb{C}$ such
	that $\Re z \geq 0$ we may write (see \cite[Proposition 3.5]{sch}) 
	\begin{align*}
		\psi (z)=a+bz+\int_{[0, \infty)} (1-e^{-zt}) {\: \rm d}\nu (t).
	\end{align*}
	The function $\psi (z)$ is continuous for $\Re z\geq 0$ and
	holomorphic for $\Re z >0$. Thus, for any $z\in \mathbb{C}$ such
	that $\Re z \leq 1$, we have 
	\begin{align*}
		1-\psi (1-z)&= 1-b(1-z)-\int_{[0, \infty)} \big( 1-e^{-t(1-z)} \big) {\: \rm d} \nu (t) \\ 
		&= bz+\int_{[0, \infty)} e^{-t}\sum _{n\geq 1}\frac{t^n z^n}{n!} {\: \rm d} \nu(t) \\
		&= bz+\sum _{n\geq 1} \frac{z^n}{n!}\int _{[0, \infty)} e^{-t}t^n {\: \rm d} \nu(t) \\
		&=\sum _{n\geq 1}c(\psi ,n)\, z^n  
	\end{align*}
	where the coefficients $c(\psi ,n)$ were defined by \eqref{eq:3}. On the other hand 
	\begin{align*}
		\Phi _{\psi} (\xi)=\sum _{x\in \ZZ ^d}p_\psi (x)e^{i\xi x}
		&=\sum_{x\in \ZZ ^d}e^{i\xi x}\sum _{k\geq 1}p(x,k)c(\psi ,k) \\
		&=\sum _{k\geq 1}c(\psi ,k)\big(\Phi(\xi)\big)^{k},
	\end{align*}
	which finishes the proof.
\end{proof}

\begin{lemma}
	\label{psiprop}
	For $u>0$ and $\Re z\geq 0$, 
	\begin{align*}
		\big|\psi \big(u(1+z)\big)-\psi (u)\big|\leq |z|\psi (u).
	\end{align*}
	In particular, we have 
	\begin{align*}
		\psi \big(u(1+i\epsilon)\big)=\psi (u)\big(1+\calO (\epsilon)\big).
	\end{align*}
\end{lemma}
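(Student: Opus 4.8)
The plan is to exploit the integral representation of the Bernstein function together with its holomorphic extension to the half-plane $\Re z \geq 0$, which has already been invoked in the proof of Lemma~\ref{newFourier}. Since $\psi(0) = 0$, the constant $a$ in \eqref{eq:40} vanishes, so for $\Re z \geq 0$ one has $\psi(z) = bz + \int_{[0,\infty)}(1 - e^{-zt}){\: \rm d}\nu(t)$. First I would substitute $u(1+z)$ and $u$ into this formula and subtract, obtaining
\[
	\psi\big(u(1+z)\big) - \psi(u) = buz + \int_{[0,\infty)} e^{-ut}\big(1 - e^{-uzt}\big){\: \rm d}\nu(t),
\]
so that everything is reduced to estimating the integrand and the linear term under a common factor of $z$.

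Two elementary inequalities drive the argument. For any complex number $w$ with $\Re w \geq 0$ one has $\abs{1 - e^{-w}} \leq \abs{w}$; this follows by writing $1 - e^{-w} = \int_0^1 w\, e^{-sw}{\: \rm d}s$ and noting that $\abs{e^{-sw}} = e^{-s\Re w} \leq 1$ for $s \in [0,1]$. Applying this with $w = uzt$, whose real part equals $ut\,\Re z \geq 0$, gives $\abs{1 - e^{-uzt}} \leq ut\,\abs{z}$, and therefore
\[
	\big\lvert \psi\big(u(1+z)\big) - \psi(u) \big\rvert
	\leq \abs{z}\Big( bu + \int_{[0,\infty)} ut\, e^{-ut}{\: \rm d}\nu(t) \Big).
\]
It then remains to compare the bracketed quantity with $\psi(u) = bu + \int_{[0,\infty)} (1 - e^{-ut}){\: \rm d}\nu(t)$, which, after cancelling the term $bu$, amounts to the pointwise inequality $s\,e^{-s} \leq 1 - e^{-s}$ for $s = ut \geq 0$. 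This in turn is just a rearrangement of the elementary estimate $1 + s \leq e^s$, valid for all $s \geq 0$.

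Combining these observations yields $bu + \int_{[0,\infty)} ut\,e^{-ut}{\: \rm d}\nu(t) \leq \psi(u)$ and hence the desired bound $\big\lvert \psi(u(1+z)) - \psi(u)\big\rvert \leq \abs{z}\,\psi(u)$. The ``in particular'' statement then follows immediately by taking $z = i\epsilon$, for which $\Re z = 0 \geq 0$ and $\abs{z} = \epsilon$, giving $\big\lvert \psi(u(1+i\epsilon)) - \psi(u)\big\rvert \leq \epsilon\,\psi(u)$ and thus $\psi(u(1+i\epsilon)) = \psi(u)(1 + \calO(\epsilon))$. I do not anticipate a serious obstacle; the only points requiring care are the justification of the complex bound $\abs{1 - e^{-w}} \leq \abs{w}$ on the closed half-plane and the legitimacy of manipulating the Bernstein representation termwise there, both of which are standard once the holomorphic extension of $\psi$ is in hand.
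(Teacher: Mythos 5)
Your proof is correct and follows essentially the same route as the paper: the same difference formula $\psi(u(1+z))-\psi(u)=buz+\int_{[0,\infty)}e^{-ut}(1-e^{-uzt})\,{\rm d}\nu(t)$, the bound $\abs{1-e^{-w}}\leq\abs{w}$ for $\Re w\geq 0$, and the comparison $ut\,e^{-ut}\leq 1-e^{-ut}$ via $e^x\geq 1+x$. The only difference is that you supply the short justification of $\abs{1-e^{-w}}\leq\abs{w}$, which the paper takes for granted.
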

\begin{proof}
From the representation of the Bernstein function we get 
\begin{align*}
	\psi \big(u(1+z)\big)-\psi (u)=buz+\int _{[0, \infty)}e^{-ut}
	\big( 1-e^{-uzt} \big) {\: \rm d} \nu (t).
	\end{align*}
	Since $|1-e^{-z}|\leq |z|$ for $\Re z \geq 0$, we obtain 
	\begin{align*}
		\big|\psi \big(u(1+z)\big)-\psi (u)\big|
		&\leq bu|z| +|z|\int_{[0, \infty)} ute^{-ut} {\: \rm d}\nu (t) \\
		&\leq bu|z|+|z|\int _{[0, \infty)} \big (1-e^{-ut}\big){\: \rm d} \nu (t)
		=|z|\psi (u),
	\end{align*}
	where we used the inequality $e^x \geq 1+x$.
\end{proof}
\begin{proof}[Proof of Theorem \ref{domain}]
	Let $a_n \equiv 0$. For $(b_n : n \in \NN)$ we choose a sequence of real numbers such
	that
	\begin{equation}
		\label{eq:34}
		\lim_{n \to \infty} n \psi \big(b_n^{-2}\big) = 1.
	\end{equation}
	We are going to show that for every $\xi \in \RR^d$,
	\[ 
		\lim_{n \to \infty} \EE\big(e^{i\xi (S_n^{\psi}-a_n)/b_n}\big)
		=\exp\{-2^{-\alpha/2} \sprod{Q \xi}{\xi}^{\alpha /2}\}
	\]
	where $Q$ is defined in \eqref{eq:41}. Since for some $C > 0$ and for all
	$x \in \RR$ 
	\[
		\Big|1 - \cos x - \frac{x^2}{2} \Big| \leq C x^4,
	\]
	then for $\theta \in [-\pi, \pi)^d$ we can estimate
	\begin{align}
		\nonumber
		\Big| \Re\big(1 - \Phi(\theta)\big) - \frac{1}{2} \sprod{Q \theta}{\theta} \Big|
		& \leq
		\sum_{v \in \calV} p(v) \Big|1 - \cos\sprod{v}{\theta} - \frac{1}{2} \sprod{v}{\theta}^2\Big| \\
		\label{eq:26}
		& \leq 
		C_1 \norm{\theta} \sprod{Q \theta}{\theta}.
	\end{align}
	Next, since
	\[
		\sum_{v \in \calV} p(v) \cdot v = 0,
	\]
	and there is $C > 0$ such that for all $x \in \RR$
	\[
		\abs{\sin x - x} \leq C \abs{x}^3,
	\]
	we get
	\begin{align}
		\nonumber
		\abs{\Im \Phi(\theta)} = \Big| \sum_{v \in \calV} p(v) \big(\sin \sprod{v}{\theta} - \sprod{v}{\theta}\big) \Big|
		& 
		\leq 
		\sum_{v \in \calV} p(v) \big| \sin \sprod{v}{\theta} - \sprod{v}{\theta} \big| \\
		\label{eq:43}
		& \leq
		C_2
		\norm{\theta} \sprod{Q\theta}{\theta}.
	\end{align}
	Therefore, by Lemma \ref{psiprop} and estimates \eqref{eq:26} and \eqref{eq:43}, we obtain
	\begin{align}
		\nonumber
		\big|
		\psi\big(1 - \Phi(\theta)\big) - \psi\big(\Re\big(1 - \Phi(\theta)\big)\big)
		\big| 
		& \leq 
		\bigg|\frac{\Im \Phi(\theta)}{\Re\big(1 - \Phi(\theta)\big)} \bigg|
		\psi\big(\Re\big(1 - \Phi(\theta)\big)\big) \\
		& \label{eq:36} \leq
		4 C_2 \norm{\theta}
		\psi\big(\Re\big(1 - \Phi(\theta)\big)\big).
	\end{align}
	provided $\norm{\theta} \leq (4 C_1)^{-1}$. Now, let us fix $\xi \in \RR^d$. We have
	\begin{align*}
		\EE \big(e^{i\xi \, S_n^{\psi}/b_n}\big)
		=\big( \Phi _{\psi} (\xi /b_n) \big)^n
	\end{align*}
	where
	\[
		\Phi_\psi(\xi) = 1 - \psi(1 - \Phi(\xi)).
	\]
	Since
	\[
		\lim_{n \to \infty} \Phi_\psi(\xi/b_n) = 1,
	\]
	there is $N > 0$ such that for $n > N$, $\Re \Phi_\psi(\xi/b_n) > 1/2$.
	Hence, by Lemma \ref{newFourier} and \eqref{eq:36}, we get
	\footnote{$\Log$ denotes the principal value of complex logarithm.}
	\begin{align*}
		\Log \Phi_\psi(\xi / b_n) &= \Log \big(1-\psi \big(1-\Phi (\xi /b_n)\big)\big) \\
		&= - \psi \big(1-\Phi(\xi /b_n)\big) \Big(1 + \calO\big(\big|\psi \big(1-\Phi (\xi /b_n)\big)\big|\big)\Big)\\
		&= - \psi \big(\Re(1 - \Phi(\xi /b_n)\big) 
		\Big(1 + \calO(b_n^{-1}) + \calO\big(\psi \big(\Re (1-\Phi (\xi /b_n))\big)\big)\Big).
	\end{align*}
	Next, by \eqref{eq:26}, we have
	\[
		\lim_{n \to \infty}	
		\frac{\Re(1 - \Phi(\xi/b_n))}{\sprod{Q \xi}{\xi}/(2b_n^2)} = 1,
	\]
	thus, by applying Claim \ref{cl:1}, we obtain
	\[
		\lim_{n \to \infty} \frac{\psi\big(\Re(1 - \Phi(\xi/b_n))\big)}{\psi\big(b_n^{-2}\big)}
		=
		\lim_{n \to \infty} \frac{\psi\big(\sprod{Q \xi}{\xi}/(2b_n^2)\big)} {\psi\big(b_n^{-2}\big)} =
		2^{-\alpha/2} \sprod{Q \xi}{\xi}^{\alpha/2}.
	\]
	Finally, by \eqref{eq:34}, we conclude
	\[
		\lim_{n \to \infty} n \Log \Phi_\psi(\xi /b_n) 
		= -2^{-\alpha/2} \sprod{Q \xi}{\xi}^{\alpha/2}. \qedhere
	\]
\end{proof}

\section{Functional Limit Theorem}\label{sec:FLT}
Without loss of generality we prove the functional limit theorem in the one-dimensional case. For simplicity we consider random walk $(S, p) \in \mathscr{S}$ such that $S_n=\sum _{i=1}^n X_i$, where $X_i$ are independent and identically distributed random variables taking values in $\ZZ $ and satisfying
\begin{align}
 	\EE X_1=0\quad \mathrm{and}\quad \mathrm{Var}(X_1)=1.\label{moments}
\end{align} 
We consider the discrete subordinator 
$(\tau_n : n \in \NN)$ defined by equations \eqref{eq:3} and \eqref{eq:4} and we set $\tau_0 = 0$.
We assume that $\psi$ is a Bernstein function regularly varying at zero of index $\alpha/2$, $0 < \alpha < 2$,
such that $\psi(0) = 0$ and $\psi(1)=1$. 
Let $S^\psi$ be the subordinate random walk and let $B$ be the Brownian motion and $B^\alpha$ the symmetric
$\alpha$-stable process in $\RR^d$. In this section we aim to prove the following theorem.
\begin{theorem}\label{FLT}
	Assume that $(b_n : n \in \NN)$ is a sequence of positive numbers such that
	\[
		\lim _{n\to \infty } n\psi(b_n^{-1}) = 1.
	\]
	Then the sequence of random elements $\big(b_n^{-1/2} S^\psi_{[nt]} : t \geq 0 \big)$ converges in the Skorohod
	space to the random element $B^\alpha$.
\end{theorem}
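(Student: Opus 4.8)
The plan is to realise the rescaled subordinate walk as a \emph{composition} of two independent rescaled processes and to pass to the limit through Whitt's continuity theorem for the composition map. Recall from \eqref{eq:4} that $S^\psi_{[nt]}$ has the law of $S_{\tau_{[nt]}}$. Since $b_n \to \infty$, we may write
\begin{equation*}
	b_n^{-1/2} S^\psi_{[nt]} = W_n\big(U_n(t)\big), \qquad
	W_n(u) = b_n^{-1/2} S_{[b_n u]}, \quad U_n(t) = b_n^{-1} \tau_{[nt]},
\end{equation*}
since $b_n U_n(t) = \tau_{[nt]}$ is an integer. It therefore suffices to prove the joint convergence $(W_n, U_n) \Rightarrow (B, \sigma^\alpha)$ in the Skorohod space, with $B$ and $\sigma^\alpha$ independent, and then to push it through the composition map.

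For the outer process, $b_n \to \infty$ together with \eqref{moments} and Donsker's invariance principle gives $W_n \Rightarrow B$ in $D\big([0,\infty), \RR\big)$; the periodicity of $S$ is irrelevant here. For the inner process I would first identify the limit through Laplace transforms: using the generating identity $\EE z^{R_1} = 1 - \psi(1 - z)$ from the proof of Lemma \ref{newFourier},
\begin{equation*}
	\EE e^{-\lambda U_n(t)} = \big(1 - \psi\big(1 - e^{-\lambda/b_n}\big)\big)^{[nt]}.
\end{equation*}
Since $1 - e^{-\lambda/b_n} \sim \lambda/b_n$, Claim \ref{cl:1} and the regular variation of $\psi$ give $\psi\big(1 - e^{-\lambda/b_n}\big) \sim \lambda^{\alpha/2} \psi\big(b_n^{-1}\big)$, so by the normalisation $n \psi(b_n^{-1}) \to 1$ the right-hand side tends to $e^{-t \lambda^{\alpha/2}}$. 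This identifies the one-dimensional marginals of the limit as those of the $\alpha/2$-stable subordinator $\sigma^\alpha$. Moreover Theorem \ref{thm:2} with $n = 1$ shows that $1 - F_1(t) \sim \psi(t^{-1})/\Gamma(1-\alpha/2)$ is regularly varying of index $-\alpha/2$, so the step variables $R_i$ lie in the domain of attraction of the positive $\alpha/2$-stable law; the classical functional limit theorem for partial sums of such i.i.d.\ variables then upgrades the marginal statement to $U_n \Rightarrow \sigma^\alpha$ in $D\big([0,\infty), [0,\infty)\big)$.

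Because $S$ and $\tau$ are independent, $W_n$ and $U_n$ are independent for each $n$, so the two marginal limits combine to $(W_n, U_n) \Rightarrow (B, \sigma^\alpha)$ with independent coordinates. To finish I would invoke Whitt's theorem \cite{Whitt}: the composition map $(x, y) \mapsto x \circ y$ is continuous at every pair $(x, y)$ with $x$ continuous and $y$ non-decreasing. As $B$ is almost surely continuous and $\sigma^\alpha$ almost surely non-decreasing, the joint limit sits in the continuity set of the composition map, and the continuous mapping theorem yields
\begin{equation*}
	b_n^{-1/2} S^\psi_{[nt]} = W_n \circ U_n \Longrightarrow B \circ \sigma^\alpha = B^\alpha,
\end{equation*}
the last equality being the defining subordination of the $\alpha$-stable process.

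I expect the main obstacle to be the process-level convergence of the subordinator, that is, upgrading the clean Laplace-transform identification of the one-dimensional marginals to full convergence in the Skorohod topology (tightness and the correct treatment of the jumps), and then the careful verification that the hypotheses of Whitt's composition theorem are met. In particular one must check that the almost sure continuity of the outer limit $B$ renders the composition continuous despite the jumps of $\sigma^\alpha$, so that the limit is genuinely $B^\alpha$ and not some other time change of Brownian motion.
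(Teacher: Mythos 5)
Your proposal is correct and follows essentially the same route as the paper: the same realisation of $b_n^{-1/2} S^\psi_{[nt]}$ as the composition of $W_n(u)=b_n^{-1/2}S_{[b_n u]}$ with $U_n(t)=b_n^{-1}\tau_{[nt]}$, Donsker's principle \eqref{SRW_FLT} for the outer process, joint convergence of the independent pair, and Whitt's continuity theorem for the composition map applied at a continuous outer limit and a non-decreasing inner limit. The only substantive divergence is in the step you yourself flag as the main obstacle, namely the Skorohod convergence of the rescaled subordinator: you delegate it to the classical functional limit theorem for partial sums of i.i.d.\ nonnegative variables in the domain of attraction of a positive stable law, while the paper proves it by hand in Theorem \ref{cor}, checking convergence of finite-dimensional distributions (via Lemma \ref{lem:1} and independence of increments) together with the Aldous-type stopping-time criterion, the latter resting on the uniform tail estimate of Theorem \ref{thm:2}. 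Both are legitimate; the paper's route is self-contained and is precisely what Theorem \ref{thm:2} is developed for, whereas yours is shorter at the price of an external citation whose norming constants you correctly pin down through the Laplace-transform computation. One small inaccuracy to fix: Theorem \ref{thm:2} is stated for $n$ and $t$ both tending to infinity, so it cannot be invoked ``with $n=1$''; the tail asymptotic $1-F_1(t)\sim \psi\big(t^{-1}\big)/\Gamma(1-\alpha/2)$ is instead derived in Lemma \ref{lem:1} from the Laplace transform \eqref{eq:23} via the Karamata Tauberian theorem and the monotone density theorem. With that reference corrected, your argument is sound.
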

Let us recall that a sequence $(X^{(n)} : n \in \NN)$ of random elements converges to a random element $X$
in the Skorohod space $\DD\big([0, \infty), \RR\big)$ equipped with the $J_1$-topology if the following two conditions hold
(see \cite[Theorem 16.10 and Theorem 16.11]{Kall}):
\begin{enumerate}
\item
	The finite dimensional distributions of $X^{(n)}$ converge to the distribution of $X$.
\item
	For any bounded sequence $(T_n : n \in \NN)$ of $X^{(n)}$-stopping times,
	and any sequence $(h_n : n \in \NN)$ of positive numbers converging to zero we have
	\[
	\lim_{n \to \infty} \PP\Big(\big| X^{(n)}_{T_n+h_n}-X^{(n)}_{T_n} \big| \geq \epsilon \Big) = 0.
	\]
\end{enumerate}

Our approach to prove Theorem \ref{FLT} is based on the continuity of a composition mapping in the Skorohod space.
More precisely, if a sequence of random elements $(X^{(n)}, \tau^{(n)})$ converges to 
$(X, \tau)$ in the product Skorohod space $\DD\big([0,\infty ), \RR \big)
\times \DD \big([0,\infty ), [0,\infty )\big)$, and if the trajectories of the limiting process $X_t$ are
continuous, and the trajectories of the process $\tau_t$ are non-decreasing then the composition of the processes
\[
	Y^{(n)}_t = X^{(n)}_{\tau^{(n)}_t}
\]
converges to the process $Y_t = X_{\tau_t}$ in $\DD \big([0,\infty ), \RR \big)$, see \cite[Theorem 3.1]{Whitt}
for details. We also use the following invariance principle, for any $S \in \mathscr{S}$ satisfying \eqref{moments}
we have
\begin{align}
	\label{SRW_FLT}
	\lim_{n \to \infty} \frac{1}{\sqrt{n}} S_{[nt]} = B_t
\end{align}
in the Skorohod space $\DD \big([0,\infty), \RR\big)$, see \cite[Theorem 16.1]{Bill}. In view of the above, to prove
Theorem \ref{FLT} we show the convergence of appropriately scaled subordinator $(\tau_n : n \in \NN)$ and then we
combine it with \eqref{SRW_FLT}.

We start with the following elementary lemma.
\begin{lemma}
	\label{lem:1}
	Let $(b_n : n \in \NN)$ be a sequence of positive numbers such that 
	\[
		\lim_{n \to \infty} n \psi\big(b_n^{-1}\big) = 1.
	\]
	Then $\big(b_n^{-1} \tau_n : n \in \NN\big)$ converges in distribution to $(\alpha/2)$-stable law.
\end{lemma}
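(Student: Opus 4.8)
The plan is to prove convergence in distribution by showing pointwise convergence of Laplace transforms, since $b_n^{-1} \tau_n$ is a non-negative random variable and the $(\alpha/2)$-stable law is determined by its Laplace transform. The target is the one-sided $(\alpha/2)$-stable distribution, whose Laplace transform is $\exp(-c \lambda^{\alpha/2})$ for an appropriate constant $c > 0$. Concretely, I would compute $\EE\big(e^{-\lambda b_n^{-1} \tau_n}\big)$ and show it converges to such an exponential.

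First I would recall from \eqref{eq:4} that $\tau_n = R_1 + \dots + R_n$ is a sum of $n$ i.i.d. copies, so by independence the Laplace transform factors:
\begin{align*}
	\EE\big(e^{-\lambda b_n^{-1} \tau_n}\big)
	= \Big(\EE\big(e^{-\lambda b_n^{-1} R_1}\big)\Big)^n
	= \Big(\EE\big(e^{-(\lambda/b_n) \tau_1}\big)\Big)^n.
\end{align*}
From \eqref{eq:23} one reads off that the single-step Laplace transform is governed by $\psi$; indeed, evaluating the Laplace--Stieltjes transform of $F_1$ gives
\begin{align*}
	\EE\big(e^{-\mu \tau_1}\big)
	= 1 - \psi\big(1 - e^{-\mu}\big),
\end{align*}
which follows directly from \eqref{eq:18} with $n = 1$ (the Laplace transform of $\mathrm{d} F_1$). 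Substituting $\mu = \lambda / b_n$, the computation reduces to analysing
\begin{align*}
	\Big(1 - \psi\big(1 - e^{-\lambda/b_n}\big)\Big)^n
	= \exp\Big(n \log\Big(1 - \psi\big(1 - e^{-\lambda/b_n}\big)\Big)\Big).
\end{align*}

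The core of the argument is then an asymptotic evaluation of the exponent. Since $b_n \to \infty$, we have $1 - e^{-\lambda/b_n} \sim \lambda/b_n$ as $n \to \infty$, so by Claim \ref{cl:1} (applied with $a_n = 1 - e^{-\lambda/b_n}$ and $b_n' = \lambda b_n^{-1}$) together with the regular variation of $\psi$ of index $\alpha/2$, I obtain
\begin{align*}
	\lim_{n \to \infty} \frac{\psi\big(1 - e^{-\lambda/b_n}\big)}{\psi\big(b_n^{-1}\big)}
	= \lambda^{\alpha/2}.
\end{align*}
Because $\psi\big(1 - e^{-\lambda/b_n}\big) \to 0$, the logarithm satisfies $\log(1 - u) = -u(1 + o(1))$, hence
\begin{align*}
	n \log\Big(1 - \psi\big(1 - e^{-\lambda/b_n}\big)\Big)
	= -n\, \psi\big(1 - e^{-\lambda/b_n}\big)\big(1 + o(1)\big)
	= -\big(n\, \psi(b_n^{-1})\big) \cdot \lambda^{\alpha/2}\big(1 + o(1)\big).
\end{align*}
Invoking the normalising hypothesis $n\, \psi(b_n^{-1}) \to 1$ yields the limit $-\lambda^{\alpha/2}$, so $\EE\big(e^{-\lambda b_n^{-1} \tau_n}\big) \to e^{-\lambda^{\alpha/2}}$, which is precisely the Laplace transform of the $(\alpha/2)$-stable law. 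By the continuity theorem for Laplace transforms of non-negative random variables, this gives convergence in distribution.

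The main obstacle I anticipate is organising the two asymptotic regimes cleanly: one must combine the substitution $1 - e^{-\lambda/b_n} \sim \lambda/b_n$ with the regular variation of $\psi$ in the precise form of Claim \ref{cl:1}, and separately control the $\log(1-u) \sim -u$ error so that it does not interfere with the multiplicative normalisation $n\,\psi(b_n^{-1}) \to 1$. All three limiting factors must be shown to decouple, which is routine but requires keeping the $o(1)$ terms uniform in the relevant range for fixed $\lambda$. No uniformity in $\lambda$ is needed since pointwise convergence of Laplace transforms suffices for the continuity theorem.
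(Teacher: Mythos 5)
Your proof is correct, but it takes a genuinely different route from the paper's. The paper reduces the lemma to the classical stable limit theorem for sums of i.i.d.\ non-negative random variables, citing \cite[Theorem 2(b), Section XIII.6]{feller}: it verifies the required tail condition $1-F_1(t)\sim c\,t^{-\alpha/2}\ell(t)$ (equation \eqref{eq:38}) by observing from \eqref{eq:23} that $\calL\{1-F_1\}(\lambda)=\lambda^{-1}\psi\big(1-e^{-\lambda}\big)\sim\lambda^{\alpha/2-1}\ell(1/\lambda)$ as $\lambda\to 0$, and then invoking Karamata's Tauberian theorem together with the monotone density theorem. You instead write the Laplace transform of $b_n^{-1}\tau_n$ in closed form as $\big(1-\psi\big(1-e^{-\lambda/b_n}\big)\big)^n$ and pass to the limit directly, combining Claim \ref{cl:1}, the regular variation of $\psi$ at zero, and the normalisation $n\psi\big(b_n^{-1}\big)\to 1$, concluding by the continuity theorem for Laplace transforms. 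Your argument is more elementary and self-contained: it avoids Tauberian theory, the monotone density theorem, and the black-box citation of Feller, and it has the added benefit of pinning down the limit law exactly as the one with transform $e^{-\lambda^{\alpha/2}}$, i.e.\ the law of $\sigma^{\alpha/2}_1$, which is precisely the normalisation used afterwards in Theorem \ref{cor}. What the paper's route buys in exchange is the tail asymptotic \eqref{eq:38} for $1-F_1$ as a by-product. Your closing worry about uniformity is unfounded: for fixed $\lambda$ the exponent is a product of three convergent numerical sequences, so the factors decouple trivially, and pointwise convergence in $\lambda$ is indeed all the continuity theorem requires.
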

\begin{proof}
	Let $F_1(t) = \PP(\tau_1 \leq t)$. We observe that the argument given in the proof of
	\cite[Theorem 2(b), Section XIII.6]{feller}, shows that if
	\begin{equation}
		\label{eq:38}
		1-F_1(t) \sim \frac{1}{\Gamma (\alpha /2)} t^{-\alpha/2} \ell(t),
	\end{equation}
	as $t$ tends to infinity, then $b_n^{-1} \tau_n$ converges in distribution to $(\alpha/2)$-stable law. Hence,
	we just need to verify \eqref{eq:38}. Since by \eqref{eq:23}
	\begin{align*}
		\calL\{ 1 - F_1 \}(\lambda) \sim \lambda ^{\alpha /2 -1} \ell (1/\lambda ),
	\end{align*}
	as $\lambda$ tends to zero, the asymptotic \eqref{eq:38} is a consequence of the Tauberian theorem together
	with the monotone density theorem (see e.g. \cite[Theorem 1.7.1 and Theorem 1.7.2]{bgt}).
\end{proof}

For $0 < \beta < 1$, let us denote by $\sigma^\beta = (\sigma^\beta_t : t \geq 0)$ the one-sided $\beta$-stable
subordinator which is uniquely determined by the following relation
\[
	\EE\Big(e^{-\lambda \sigma^\beta_t}\Big) = e^{-s \lambda^\beta}.
\] 
\begin{theorem}\label{cor}
	Let $(b_n : n \in \NN)$ be a sequence of positive numbers such that
	\begin{equation}
		\label{eq:21}
		\lim_{n \to \infty} n \psi\big(b_n^{-1}\big) = 1.
	\end{equation}
	Then the sequence of random elements $\big(b_n^{-1} \tau_{[n t]} : t \geq 0\big)$ converges in
	the Skorohod space $\DD\big([0,\infty ), [0,\infty )\big)$ to the random element $\sigma^{\alpha/2}$.
\end{theorem}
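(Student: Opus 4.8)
The plan is to establish the two conditions from \cite{Kall} recalled above for the rescaled process $X^{(n)}_t = b_n^{-1} \tau_{[nt]}$, which has non-negative and independent increments because $\tau_n = R_1 + \cdots + R_n$ is a partial sum of the i.i.d.\ non-negative variables $R_i$. Everything rests on one asymptotic estimate for the Laplace transform. From \eqref{eq:23} the single increment satisfies $\EE\big(e^{-\lambda \tau_1}\big) = 1 - \psi\big(1 - e^{-\lambda}\big)$, so by independence $\EE\big(e^{-\lambda \tau_m}\big) = \big(1 - \psi(1 - e^{-\lambda})\big)^m$. Since \eqref{eq:21} forces $\psi(b_n^{-1}) \to 0$ and hence $b_n \to \infty$, we have $1 - e^{-\lambda/b_n} \sim \lambda/b_n$, so Claim \ref{cl:1} combined with the regular variation of $\psi$ and the normalization \eqref{eq:21} gives, for each fixed $\lambda > 0$,
\begin{equation*}
	\psi\big(1 - e^{-\lambda/b_n}\big) \sim \lambda^{\alpha/2}\, \psi\big(b_n^{-1}\big) \sim \frac{\lambda^{\alpha/2}}{n}.
\end{equation*}

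Next I would verify the convergence of the finite-dimensional distributions. Fix $0 = t_0 < t_1 < \cdots < t_k$ and $\lambda_1, \ldots, \lambda_k > 0$. Since the increments $\tau_{[n t_j]} - \tau_{[n t_{j-1}]}$ are independent and each is a sum of $[n t_j] - [n t_{j-1}]$ copies of $R_1$, the joint Laplace transform factorizes and, using $[n t_j] - [n t_{j-1}] \sim n(t_j - t_{j-1})$ together with the estimate above,
\begin{equation*}
	\EE\bigg(\exp\Big(-\sum_{j=1}^k \lambda_j b_n^{-1}\big(\tau_{[n t_j]} - \tau_{[n t_{j-1}]}\big)\Big)\bigg)
	= \prod_{j=1}^k \big(1 - \psi\big(1 - e^{-\lambda_j/b_n}\big)\big)^{[n t_j] - [n t_{j-1}]}
	\longrightarrow \prod_{j=1}^k e^{-(t_j - t_{j-1}) \lambda_j^{\alpha/2}}.
\end{equation*}
The right-hand side is the joint Laplace transform of the increments of $\sigma^{\alpha/2}$, which are independent and stationary with $\EE(e^{-\lambda \sigma^{\alpha/2}_t}) = e^{-t \lambda^{\alpha/2}}$. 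By the continuity theorem for Laplace transforms of measures on $[0, \infty)$ the scaled increments converge jointly in law, and since both $X^{(n)}$ and $\sigma^{\alpha/2}$ have independent increments starting at $0$, the finite-dimensional distributions of $X^{(n)}$ converge to those of $\sigma^{\alpha/2}$.

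The remaining point is Aldous's tightness condition, which I expect to be the main obstacle. Let $(T_n)$ be a bounded sequence of $X^{(n)}$-stopping times and $h_n \to 0^+$. Here I would exploit the non-negativity of the increments: since $[n(T_n + h_n)] - [n T_n] \leq [n h_n] + 1$, monotonicity yields
\begin{equation*}
	0 \leq X^{(n)}_{T_n + h_n} - X^{(n)}_{T_n} \leq b_n^{-1}\big(\tau_{[n T_n] + [n h_n] + 1} - \tau_{[n T_n]}\big).
\end{equation*}
As $[n T_n]$ is a stopping time for the filtration generated by $(R_i)$ while $m_n := [n h_n] + 1$ is deterministic, the strong Markov property shows that $\tau_{[n T_n] + m_n} - \tau_{[n T_n]}$ has the law of $\tau_{m_n}$, whence
\begin{equation*}
	\PP\big(\big|X^{(n)}_{T_n + h_n} - X^{(n)}_{T_n}\big| \geq \epsilon\big) \leq \PP\big(b_n^{-1} \tau_{m_n} \geq \epsilon\big).
\end{equation*}
The Laplace transform estimate then gives $\EE\big(e^{-\lambda b_n^{-1} \tau_{m_n}}\big) = \big(1 - \psi(1 - e^{-\lambda/b_n})\big)^{m_n} \to 1$, because $m_n \psi(1 - e^{-\lambda/b_n}) \sim (m_n/n) \lambda^{\alpha/2} \to 0$ as $h_n \to 0$; hence $b_n^{-1} \tau_{m_n} \to 0$ in probability and the displayed probability tends to zero. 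The delicate part is precisely the random stopping time, and it is the non-negativity of the increments that makes it tractable, since it permits dominating the increment over $[T_n, T_n + h_n]$ by an increment over a \emph{deterministic} number $m_n$ of steps issued from the stopping time, after which the strong Markov property applies. Combining the two conditions with Kallenberg's criterion yields the convergence of $X^{(n)}$ to $\sigma^{\alpha/2}$ in $\DD\big([0, \infty), [0, \infty)\big)$.
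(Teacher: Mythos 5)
Your proof is correct, but it takes a genuinely different route from the paper's in both halves of Kallenberg's criterion. For the finite-dimensional distributions, the paper does not compute Laplace transforms of the rescaled process directly: it first proves a one-dimensional statement (Lemma \ref{lem:1}, that $b_n^{-1}\tau_n$ converges to an $(\alpha/2)$-stable law, obtained via the tail asymptotic $1-F_1(t)\sim t^{-\alpha/2}\ell(t)/\Gamma(\alpha/2)$ from the Tauberian and monotone density theorems and Feller's domain-of-attraction argument), and then handles general $t$ by writing $X^{(n)}_t=(b_{[nt]}/b_n)\cdot b_{[nt]}^{-1}\tau_{[nt]}$ and invoking the uniform convergence theorem for regularly varying functions to get $b_{[nt]}/b_n\to t^{2/\alpha}$. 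Your direct computation of the joint Laplace transform of the increments, driven by the single estimate $\psi\big(1-e^{-\lambda/b_n}\big)\sim\lambda^{\alpha/2}/n$ (which is exactly Claim \ref{cl:1} plus \eqref{eq:21}), is more self-contained and bypasses both the Tauberian machinery and the scaling argument. For the Aldous condition, the paper makes the same reduction to $\PP\big(b_n^{-1}\tau_{[nh_n]+j_n}>\epsilon\big)$ with $j_n\in\{0,1\}$, but then splits into the cases $nh_n$ bounded (trivial) and unbounded, the latter requiring a subsequence argument and an appeal to Theorem \ref{thm:2}; your Laplace-transform bound $\EE\big(e^{-\lambda b_n^{-1}\tau_{m_n}}\big)=\big(1-\psi(1-e^{-\lambda/b_n})\big)^{m_n}\to 1$ with $m_n=[nh_n]+1$ handles both regimes at once and does not use Theorem \ref{thm:2} at all. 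What the paper's route buys is the reuse of results it needs anyway (Theorem \ref{thm:2} is one of its main theorems, and Lemma \ref{lem:1} has independent interest as a domain-of-attraction statement); what yours buys is economy and uniformity, at the small cost of having to justify two standard facts you use somewhat implicitly, namely the multivariate continuity theorem for Laplace transforms on $[0,\infty)^k$ and the observation that $[nT_n]$ is a stopping time for the discrete filtration $\sigma(R_1,\ldots,R_k)$ so that the strong Markov property yields $\tau_{[nT_n]+m_n}-\tau_{[nT_n]}\overset{d}{=}\tau_{m_n}$.
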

\begin{proof}
	Let $X^{(n)}_t = b_n^{-1} \tau_{[n t]}$. First, we show that the finite dimensional distributions of
	$X^{(n)}_t$ converge to the finite dimensional distribution of $\sigma^{\alpha/2}_t$. Since processes $X^{(n)}$
	have independent increments it is enough to prove convergence of their one dimensional distributions. Let us fix
	$t > 0$. We write 
	\begin{align*}
		X^{(n)}_t = \frac{b_{[nt]}}{b_n} \cdot \frac{1}{b_{[nt]}} \tau_{[n t]}.
	\end{align*}
	Since $\sigma^{\alpha/2}_1$ has $(\alpha/2)$-stable law, by Lemma \ref{lem:1} we obtain that
	$b_{[n t]}^{-1} \tau_{[n t]}$ converges in distribution to $\sigma^{\alpha/2}_1$.
	From the other side, by the uniform convergence theorem for regularly varying functions we get
	\[
		\lim_{n \to \infty} \frac{b_{[nt]}}{b_n} = t^{2/\alpha}.
	\]
	Hence, $\big(X_t^{(n)} : n \in \NN\big)$ converges in distribution to
	$t^{\alpha/2} \sigma^{\alpha/2}_1$. Because $t^{\alpha/2} \sigma^{\alpha/2}_1$ and $\sigma_t^{\alpha/2}$ have
	the same distribution the conclusion follows.
	
	Next, let $(T_n : n \in \NN)$ be any bounded sequence of $X^{(n)}$-stopping times,
	and $(h_n : n \in \NN)$ any sequence of positive numbers converging to zero. We are going to show that
	for any $\epsilon > 0$
	\begin{equation}
		\label{eq:20}
		\lim_{n \to \infty}
		\PP\Big(
		\big|
		X^{(n)}_{T_n + h_n} - X^{(n)}_{T_n}
		\big|
		> \epsilon
		\Big) = 0.
	\end{equation}
	Since
	\[
		X^{(n)}_{T_n + h_n} - X^{(n)}_{T_n} = b_n^{-1}\big(\tau_{[n T_n + n h_n]} - \tau_{[n T_n]}\big),
	\]
	$X^{(n)}_{T_n + h_n} - X^{(n)}_{T_n}$ has the same distribution as $b^{-1}_n \tau_{[n h_n] + j_n}$
	where 
	\[
		j_n = [nT_n + n h_n] - [n T_n] - [n h_n].
	\]
	We notice that $j_n \in \{0, 1\}$, thus
	\begin{align*}
		\PP\big( b_n^{-1} \big(\tau_{[n h_n] + j_n} - \tau_{[n h_n]}\big) > \epsilon\big)
		 =
		\PP\big(\tau_{j_n} > b_n \epsilon\big) 
		 \leq
		\PP\big(\tau_1 > b_n \epsilon),
	\end{align*}
	hence	
	\[
		\lim_{n \to \infty}
		\PP\big( b_n^{-1} \big(\tau_{[n h_n] + j_n} - \tau_{[n h_n]}\big) > \epsilon\big) = 0.
	\]
	We conclude that to prove \eqref{eq:20} it suffices to show that
	\begin{equation}
		\label{eq:30}
		\lim_{n \to \infty}
		\PP\big(b_n^{-1} \tau_{[n h_n]} > \epsilon\big) = 0.
	\end{equation}
	Indeed, suppose the sequence $(n h_n : n \in \NN)$ is bounded by $m \in \NN$. Then
	\[
		\lim_{n \to \infty}
		\PP\big( b_n^{-1} \tau_{[n h_n]} > \epsilon\big)
		\leq
		\lim_{n \to \infty}
		\PP\big( \tau_m > b_n \epsilon\big) = 0.
	\]
	Otherwise, let $(n_j : j \in \NN)$ be any increasing sequence, and $(n_{j_k} : k \in \NN)$ its
	subsequence with the property that $\big([n_{j_k} h_{n_{j_k}}] : k \in \NN\big)$ is strictly increasing.
	We set
	\[
		m_k = [n_{j_k} h_{n_{j_k}}].
	\]
	Observe that by \eqref{eq:21} and a regular variation of $\psi$ we have
	\[
		\lim_{k \to \infty}
		m_k \psi\big(\epsilon^{-1} b_{n_{j_k}}^{-1} \big)
		\leq
		\lim_{k \to \infty}
		h_{n_{j_k}} n_{j_k} \psi\big(\epsilon^{-1} b_{n_{j_k}}^{-1} \big) = 0.
	\]
	Therefore, we can apply Theorem \ref{thm:2} to estimate
	\begin{align*}
		\lim_{k \to \infty}
		\PP\big( b_{n_{j_k}}^{-1} \tau_{m_k} > \epsilon\big) 
		& = 
		\lim_{k \to \infty} 1 - F_{m_k}(b_{n_{j_k}} \epsilon) \\
		& \leq
		C
		\lim_{k \to \infty}
		m_k \psi\big(\epsilon^{-1} b_{n_{j_k}}^{-1}\big) = 0.
	\end{align*}
	Hence,
	\[
		\lim_{n \to \infty}
		\PP\big(b_n^{-1} \tau_{[n h_n]} > \epsilon\big) = 0,
	\]
	which implies \eqref{eq:20}.
\end{proof}

\begin{proof}[Proof of Theorem \ref{FLT}]
	Since $S$ and $\tau$ are independent, we apply \cite[Theorem 3.2]{Bill} and combine it with \eqref{SRW_FLT}, 
	and Theorem \ref{cor} to get that
	\begin{align*}
		\lim_{n \to \infty}
		\big(b_n^{-\frac{1}{2}} S_{[b_n t]},  b_n^{-1} \tau _{[nt]}\big) = 
		\big(B_t, \sigma^{\alpha /2}_t\big)
	\end{align*}
	in the product space $ \DD \big([0,\infty ), \RR \big) \times \DD \big([0,\infty ), [0,\infty )\big)$.
	Moreover, the trajectories of $B$ are continuous whereas the paths of $\sigma ^{\alpha /2}$ are increasing,
	whence we can apply \cite[Theorem 3.1]{Whitt} to get convergence of the related composition,
	\begin{align*}
		\lim_{n \to \infty}
		b_n^{-\frac{1}{2}} S^\psi_{[nt]}
		=B_{\sigma^{\alpha /2}_t}
	\end{align*}
	in $\DD\big([0,\infty ), \RR \big)$. Since $B_{\sigma_t^{\alpha/2}}$ has the same distribution as $B^\alpha_t$,
	the proof is finished.
\end{proof}
\begin{remark}
In paper \cite{mim} author considered the converse statement in Theorem \ref{FLT}, i.e. that convergence in the Skorohod space
implies regular variation of the Bernstein function related to the discrete subordinator. Moreover, the case $\alpha =2$ is treated therein as well.
\end{remark}
\section{Asymptotics of transition functions}
\label{sec:Asymp}
Let $(S, p)$ be a random walk from the class $\mathscr{S}$. We assume that $\psi$
is the Bernstein function regularly varying at zero of index $\alpha/2$, $0 < \alpha < 2$, such that $\psi(0) = 0$
and $\psi(1)=1$. By $S^\psi$ we denote the $\psi$-subordinate random walk
defined in Section \ref{sub:3}. Let $p_\psi(x, n)$ be its transition function. In this section we are going to
show the asymptotic behaviour of $p_\psi(x, n)$ as $n \psi\big(\norm{x}^{-2}\big)$ tends to zero or infinity
while $n$ and $\norm{x}$ tend to infinity.

\subsection{The strong ratio limit theorem}
We start by proving the asymptotic of $p_\psi(0, n)$.
\begin{theorem}
	\label{thm:4}
	Let $p$ be the one-step probability for random walk $S$ in the class $\mathscr{S}$. Then 
	\[
		p_\psi(0, n) \sim D_{d, \alpha} \psi^{-1}\big(n^{-1}\big)^{\frac{d}{2}} 
	\]
	as $n$ tends to infinity, where
	\[
		D_{d, \alpha} = (2 \pi)^{d/2} \frac{\Gamma(1 + d/\alpha)}{\Gamma(1 + d/2)}
		(\det Q)^{-1/2}.
	\]
\end{theorem}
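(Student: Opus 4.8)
The goal is to find the asymptotics of $p_\psi(0,n)$ as $n\to\infty$. The natural starting point is the Fourier-analytic representation. Since $p_\psi$ is a probability density on $\ZZ^d$ with characteristic function $\Phi_\psi(\theta) = 1 - \psi(1-\Phi(\theta))$ (Lemma \ref{newFourier}), I would write the $n$-step transition probability at the origin via Fourier inversion,
\begin{align*}
	p_\psi(0,n) = (2\pi)^{-d} \int_{[-\pi,\pi)^d} \big(\Phi_\psi(\theta)\big)^n \dth,
\end{align*}
where $\Phi(\theta) = \sum_{v\in\calV} p(v) e^{i\sprod{v}{\theta}}$. The whole problem then reduces to a Laplace-type asymptotic analysis of this integral as $n\to\infty$: the $n$-th power concentrates the mass near the points where $\abs{\Phi_\psi(\theta)} = 1$, i.e. where $\psi(1-\Phi(\theta))$ is small. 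Because $\psi$ is regularly varying of index $\alpha/2$ at zero, the relevant scale for $\theta$ is set by $\psi(\norm{\theta}^2) \approx n^{-1}$, which suggests the substitution $\theta = \psi^{-1}(n^{-1})^{1/2}\, u$.

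\textbf{Key steps in order.} First I would localize the integral to a neighborhood of the origin (and, if $S$ is periodic, to the finitely many other points $\theta_0$ where $\abs{\Phi(\theta_0)}=1$), controlling the tail where $\Re(1-\Phi(\theta))$ is bounded below by showing $\abs{\Phi_\psi(\theta)}^n$ decays geometrically there. Second, on the localized region I would use the expansions already established in the excerpt: by \eqref{eq:26} we have $\Re(1-\Phi(\theta)) = \tfrac12\sprod{Q\theta}{\theta}(1+o(1))$, and by \eqref{eq:36} the imaginary contribution of $\psi(1-\Phi(\theta))$ is negligible relative to the real part, so that $n\,\psi(1-\Phi(\theta)) = n\,\psi\big(\tfrac12\sprod{Q\theta}{\theta}\big)(1+o(1))$. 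Third, I would rescale $\theta = \psi^{-1}(n^{-1})^{1/2} u$; then $\norm{\theta}^2 \asymp \psi^{-1}(n^{-1})\norm{u}^2$, and using regular variation (via Claim \ref{cl:1} and the uniform convergence theorem, since $\psi^{-1}$ is regularly varying of index $2/\alpha$) one gets
\begin{align*}
	n\,\psi\Big(\tfrac12\sprod{Q\theta}{\theta}\Big) \longrightarrow 2^{-\alpha/2}\sprod{Q u}{u}^{\alpha/2}
\end{align*}
pointwise in $u$, whence $\big(\Phi_\psi(\theta)\big)^n = \exp\{n\Log\Phi_\psi(\theta)\} \to \exp\{-2^{-\alpha/2}\sprod{Q u}{u}^{\alpha/2}\}$. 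The Jacobian of the substitution produces the factor $\psi^{-1}(n^{-1})^{d/2}$, which is exactly the claimed order.

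\textbf{Final computation and main obstacle.} After passing to the limit under the integral sign, the remaining constant is
\begin{align*}
	D_{d,\alpha} = (2\pi)^{-d}\int_{\RR^d} \exp\{-2^{-\alpha/2}\sprod{Q u}{u}^{\alpha/2}\}{\: \rm d}u,
\end{align*}
and evaluating this integral — by diagonalizing $Q$ (which contributes $(\det Q)^{-1/2}$), passing to the radial variable, and recognizing a Gamma integral — should reproduce $(2\pi)^{d/2}\,\Gamma(1+d/\alpha)/\Gamma(1+d/2)\,(\det Q)^{-1/2}$. The main obstacle is not the pointwise limit but \emph{justifying the interchange of limit and integral}: one needs a uniform dominating bound for $\big(\Phi_\psi(\psi^{-1}(n^{-1})^{1/2}u)\big)^n$ on the rescaled region, valid for all large $n$. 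For this I would again invoke the Potter bounds \eqref{eq:22}, which give a two-sided power-law control $\psi(\lambda)\gtrsim \lambda^{\alpha/2+\epsilon}$ near zero; this yields a bound of the form $\abs{\Phi_\psi(\theta)}^n \leq \exp\{-c\norm{u}^{\alpha-\epsilon}\}$ that is integrable and independent of $n$, allowing dominated convergence. Handling the periodic case (where several phase points contribute, each with modulus-one value of $\Phi$) requires summing the contributions, but by the structure of $\mathscr{S}$ these conspire so that only $p_\psi(0,n)$, summed appropriately over the period, survives in the limit; I would treat this exactly as in the derivation of Corollary \ref{cor:1}.
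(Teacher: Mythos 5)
Your overall strategy --- Fourier inversion, localization, the rescaling $\theta=\psi^{-1}(n^{-1})^{1/2}u$, the pointwise limit $n\psi(1-\Phi(\theta))\to 2^{-\alpha/2}\sprod{Qu}{u}^{\alpha/2}$ via regular variation, a Potter-bound majorant for dominated convergence, and the evaluation of $\int_{\RR^d}\exp\{-2^{-\alpha/2}\sprod{Qu}{u}^{\alpha/2}\}\,{\rm d}u$ --- is exactly the route the paper takes. However, your treatment of periodicity contains a genuine gap, and as written it would not produce the stated result. You assert that at the secondary phase points $\theta_0$ with $\abs{\Phi(\theta_0)}=1$, $\Phi(\theta_0)\neq 1$, there are contributions which ``conspire'' and should be handled by summing over the period as in Corollary \ref{cor:1}. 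But Theorem \ref{thm:4} is a statement about $p_\psi(0,n)$ alone, with no periodicity correction (in contrast to Theorem \ref{thm:1}); if those points contributed at leading order, the asymptotics would be parity-dependent and the clean formula $p_\psi(0,n)\sim D_{d,\alpha}\psi^{-1}(n^{-1})^{d/2}$ would be false. The correct mechanism, which the paper isolates as a separate claim, is that subordination destroys periodicity: for $\abs{z}\leq 1$ one has $\abs{1-\psi(1-z)}=1$ if and only if $z=1$, because $1-\psi(1-z)=\sum_{k\geq 1}c(\psi,k)z^k$ with nonnegative coefficients summing to $1$ and (since $\psi$ is regularly varying of index $\alpha/2<1$, hence non-linear, so $\nu\neq 0$) $c(\psi,k)>0$ for consecutive $k$, forcing $z=z^2$. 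Consequently $\abs{\Phi_\psi(\theta_0)}<1$ at every secondary phase point, and the entire region $\{\norm{\theta}\geq\epsilon\}$ contributes only a geometrically small error $(1-\eta)^n$. Your first localization step (``$\Re(1-\Phi)$ bounded below implies geometric decay of $\abs{\Phi_\psi}^n$'') would in fact cover these points, but proving that implication on the circle $\abs{\Phi(\theta)}=1$ requires precisely this aperiodicity claim; it does not follow from a crude triangle-inequality bound, and your closing paragraph shows you have not identified it. Once this is supplied, the rest of your argument matches the paper's proof.
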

\begin{proof}
	By the Fourier inversion formula and Lemma \ref{newFourier} we can write
	\begin{equation}
		\label{eq:33}
		p_\psi(0, n) = 
		\bigg(\frac{1}{2\pi}\bigg)^d
		\int_{\mathscr{D}_d} \big(1 - \psi(1 - \Phi(\theta))\big)^n \dth
	\end{equation}
	where $\mathscr{D}_d = [-\pi, \pi)^d$. We claim that
	\begin{claim}
		For $z \in \CC$, $\abs{z} \leq 1$,
		\[
			\abs{1 - \psi(1- z)} = 1 \quad \text{if and only if}\quad z = 1.
		\]
	\end{claim}
	Indeed, as in the proof of Lemma \ref{newFourier}, for $z \in \CC$, $\abs{z} \leq 1$ we have
	\[
		1 - \psi(1 - z) = \sum_{k \geq 1} c(\psi, k) z^k,
	\]
	where the series converges absolutely for $\abs{z} \leq 1$. Since
	\[
		\big| 1 - \psi(1-z) \big| \leq \sum_{k \geq 1} c(\psi, k) \abs{z}^k \leq 
		\sum_{k \geq 1} c(\psi, k) = 1,
	\]
	the equation $\abs{1 - \psi(1 - z)} = 1$ implies $\abs{z} = 1$. Now, if $\abs{z_0} = 1$ and satisfies
	$\abs{1 - \psi(1 - z_0)} = 1$ then there is $t \in \RR$ such that
	\[
		\sum_{k \geq 1} c(\psi, k) z_0^k = e^{it},
	\]
	thus
	\[
		z_0^k = e^{i t}
	\]
	for all $k \geq 1$ with $c(\psi, k) > 0$. In particular, $z_0 = z_0^2$, thus $z_0 = 1$.

	Next, by \eqref{eq:22}, there is $x_0 > 0$ such that for all $0 < x, y < x_0$
	\begin{equation}
		\label{eq:32}
		\psi(x) \leq 2 \psi(y) \max\big\{(x/y)^{\alpha/4}, (x/y)^{3\alpha/4} \big\}.
	\end{equation}
	Take $\epsilon > 0$ satisfying
	\[
		\epsilon < \min\Big\{\frac{1}{4 C_1}, \frac{1}{8 C_2}, x_0\Big\}
	\]
	where $C_1$ and $C_2$ are constants from \eqref{eq:26} and \eqref{eq:43}.
	Since $\abs{1 - \psi(1 - \Phi(\theta))} = 1 $ if and only if $\theta \in 2\pi \ZZ^d$, there is $0 < \eta < 1$ such
	that
	\[
		\abs{1 - \psi(1 - \Phi(\theta))} \leq 1 - \eta
	\]
	for all $\theta \in \mathscr{D}_d^\epsilon = \big\{\theta \in\mathscr{D}_d : \norm{\theta} \geq \epsilon \big\}$,
	which implies
	\[
		\bigg(\frac{1}{2\pi}\bigg)^d
		\int_{\mathscr{D}_d^\epsilon} \big| 1 - \psi(1 - \Phi(\theta)) \big|^n \dth
		\leq
		(1 - \eta)^n.
	\]
	Next, we consider the integral over the $\epsilon$-ball centred at the origin. Let $(a_n : n \in \NN)$ be
	a sequence defined by
	\[
		a_n = \sqrt{\psi^{-1}\big(n^{-1}\big)}.
	\]
	By the change of variables we can write
	\[
		a_n^{-d}
		\int_{\norm{\theta} \leq \epsilon}
		\big(1 - \psi(1 - \Phi(\theta))\big)^n \dth
		=
		\int_{\norm{\xi} \leq \epsilon/a_n} 
		\big(1 - \psi(1 - \Phi(a_n \xi))\big)^n {\: \rm d}\xi.
	\]
	We are going to calculate the limit of the integrand for a fixed $\xi \in \RR^d$. By \eqref{eq:26} and \eqref{eq:43},
	we have
	\[
		\lim_{n \to \infty} \frac{1 - \Phi(a_n \xi)}{a_n^2 \sprod{Q \xi}{\xi}} = \frac{1}{2},
	\]
	thus, Claim \ref{cl:1} implies
	\begin{align*}
		\lim_{n \to \infty} n \psi\big(1 - \Phi(a_n \xi)\big)
		& =
		\lim_{n \to \infty} \frac{\psi\big(1 - \Phi(a_n \xi)\big)}{\psi\big(a_n^2 \sprod{Q \xi}{\xi}/2\big)} 
		\cdot
		\frac{\psi\big(a_n^2 \sprod{Q \xi}{\xi}/2\big)}{\psi(a_n^2)} \\
		& = 2^{-\alpha/2} \sprod{Q\xi}{\xi}^{\alpha/2},
	\end{align*}
	because $n^{-1} = \psi(a_n^2)$. Therefore,
	\[
		\lim_{n \to \infty}
		n \Log\big(1 - \psi(1 - \Phi(a_n \xi))\big) = - 2^{-\alpha/2} \sprod{Q \xi}{\xi}^{\alpha/2}.
	\]
	Using polar coordinates, we can calculate
	\[
		\int_{\RR^d} \exp\big(-2^{-\alpha/2} \sprod{Q\xi}{\xi}^{\alpha/2}\big) {\: \rm d}\xi
		=
		(2 \pi)^{d/2}
		\frac{\Gamma(1 + d/\alpha)}{\Gamma(1+d/2)}
		(\det Q)^{-1/2},
	\]
	thus to finish the proof, we need to show that the integrand $\big(1 - \psi(1 - \Phi(a_n \xi))\big)^n$ 
	on $\big\{\xi \in \RR^d : \norm{\xi} \leq \epsilon /a_n\big\}$ has an integrable majorant. Indeed,
	by \eqref{eq:36} and the choice of $\epsilon$,
	\[
		\big| 1 - \psi(1 - \Phi(a_n \xi) ) \big|
		\leq
		1 - \frac{1}{2} \psi\big(\Re(1 - \Phi(a_n \xi))\big).
	\]
	From the other side, by \eqref{eq:32} followed by \eqref{eq:26}, we get
	\[
		\psi\big(\Re(1 - \Phi(a_n \xi))\big)
		\geq
		C n^{-1} \min\big\{\sprod{Q \xi}{\xi}^{\alpha/4}, \sprod{Q \xi}{\xi}^{3\alpha/4}\big\}
	\]
	for some constant $C > 0$. Therefore, if $\norm{\xi} \leq \epsilon/a_n$ then
	\[
		\big|1 - \psi(1 - \Phi(a_n \xi))\big|^n
		\leq
		\exp\Big(-C \min\big\{\sprod{Q \xi}{\xi}^{\alpha/4}, \sprod{Q\xi}{\xi}^{3\alpha/4}\big\}\Big).
	\]
	Since the left-hand side is an integrable function on $\RR^d$, the proof is completed.
\end{proof}

The following Corollary provides a variant of the strong ratio limit theorem.
\begin{corollary}
	\label{cor:2}
	Let $p$ be the one-step probability for random walk $S$ in the class $\mathscr{S}$. Then
	\[
		p_\psi(x, n) \sim p_\psi(0, n)
	\]
	as $n \psi\big(\norm{x}^{-2}\big)$ tends to infinity.
\end{corollary}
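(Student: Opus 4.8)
The plan is to repeat, almost verbatim, the argument used for Theorem \ref{thm:4}, the only new ingredient being the oscillatory factor produced by the Fourier inversion formula. First I would write, again by Fourier inversion together with Lemma \ref{newFourier},
\begin{equation*}
	p_\psi(x, n) =
	\bigg(\frac{1}{2\pi}\bigg)^d
	\int_{\mathscr{D}_d} \big(1 - \psi(1 - \Phi(\theta))\big)^n e^{-i \sprod{x}{\theta}} \dth,
\end{equation*}
and I would read the statement sequentially: fix any sequence $(x_n : n \in \NN)$ with $\norm{x_n} \to \infty$ and $n \psi\big(\norm{x_n}^{-2}\big) \to \infty$, keeping $a_n = \sqrt{\psi^{-1}(n^{-1})}$ as in the proof of Theorem \ref{thm:4}. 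The key preliminary observation is that $a_n \norm{x_n} \to 0$. Indeed, since $\psi(a_n^2) = n^{-1}$, the hypothesis reads $\psi\big(\norm{x_n}^{-2}\big) / \psi(a_n^2) \to \infty$; as $\psi$ varies regularly at zero with positive index $\alpha/2$ and both arguments tend to zero, the Potter bounds \eqref{eq:22} force $\norm{x_n}^{-2} / a_n^2 \to \infty$, that is $a_n \norm{x_n} \to 0$.

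With this in hand I would split the integral exactly as before. Over $\mathscr{D}_d^\epsilon = \{\theta \in \mathscr{D}_d : \norm{\theta} \geq \epsilon\}$ the modulus of the integrand is at most $(1 - \eta)^n$, so this contribution, multiplied by $a_n^{-d}$ (which is regularly varying in $n$ and hence grows subexponentially), tends to zero. Over $\norm{\theta} \leq \epsilon$ the substitution $\theta = a_n \xi$ gives
\begin{equation*}
	a_n^{-d} \int_{\norm{\theta} \leq \epsilon} \big(1 - \psi(1 - \Phi(\theta))\big)^n e^{-i \sprod{x_n}{\theta}} \dth
	=
	\int_{\norm{\xi} \leq \epsilon/a_n} \big(1 - \psi(1 - \Phi(a_n \xi))\big)^n e^{-i a_n \sprod{x_n}{\xi}} {\: \rm d}\xi.
\end{equation*}
The modulus of this integrand is bounded by the integrable majorant $\exp\big(-C \min\{\sprod{Q\xi}{\xi}^{\alpha/4}, \sprod{Q\xi}{\xi}^{3\alpha/4}\}\big)$ produced in the proof of Theorem \ref{thm:4}, which does not involve the phase; pointwise, $\big(1 - \psi(1 - \Phi(a_n \xi))\big)^n \to \exp\big(-2^{-\alpha/2} \sprod{Q\xi}{\xi}^{\alpha/2}\big)$ by the computation there, while $e^{-i a_n \sprod{x_n}{\xi}} \to 1$ precisely because $a_n \norm{x_n} \to 0$.

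Dominated convergence then yields
\begin{equation*}
	\lim_{n \to \infty} a_n^{-d} p_\psi(x_n, n)
	=
	\bigg(\frac{1}{2\pi}\bigg)^d \int_{\RR^d} \exp\big(-2^{-\alpha/2} \sprod{Q\xi}{\xi}^{\alpha/2}\big) {\: \rm d}\xi
	=
	\lim_{n \to \infty} a_n^{-d} p_\psi(0, n),
\end{equation*}
the right-hand equality being exactly Theorem \ref{thm:4}. Since the common limit is a strictly positive constant, dividing gives $p_\psi(x_n, n) / p_\psi(0, n) \to 1$, which is the claim. I expect the only genuinely new point to be the preliminary step $a_n \norm{x_n} \to 0$; once that is secured, the passage to the limit is literally the dominated-convergence argument of Theorem \ref{thm:4} carried out with the extra factor $e^{-i a_n \sprod{x_n}{\xi}}$, whose presence is harmless because the majorant is phase-independent and the factor converges to $1$.
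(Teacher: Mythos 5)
Your argument is correct. The one genuinely new point you isolate --- that $a_n\norm{x_n} \to 0$, where $a_n = \sqrt{\psi^{-1}(n^{-1})}$, whenever $n\psi\big(\norm{x_n}^{-2}\big)\to\infty$ --- is exactly the fact the paper also needs, and your derivation of it from the Potter bounds \eqref{eq:22} is sound (the paper phrases the same fact as $\norm{x}^2\psi^{-1}(n^{-1})\to 0$ and likewise deduces it from \eqref{eq:22}). The rest of your proof is also fine: the phase $e^{-i a_n\sprod{x_n}{\xi}}$ has modulus one, so the integrable majorant constructed in the proof of Theorem \ref{thm:4} still dominates, and dominated convergence applies.

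Where you diverge from the paper is in how this fact is deployed. The paper does not recompute the limit of $a_n^{-d}p_\psi(x,n)$; instead it estimates the \emph{difference} of the two Fourier integrals, using $\abs{e^{-i\sprod{\theta}{x}}-1}\leq C\norm{x}\norm{\theta}$ to get $\abs{p_\psi(x,n)-p_\psi(0,n)}\leq C\norm{x}\,\psi^{-1}(n^{-1})^{(d+1)/2}$, and then divides by the asymptotics of $p_\psi(0,n)$ from Theorem \ref{thm:4} to obtain a relative error $O\big(\norm{x}\,\psi^{-1}(n^{-1})^{1/2}\big)$. That route buys an explicit rate for the ratio $p_\psi(x,n)/p_\psi(0,n)-1$; your route instead produces, as a by-product, the full local limit statement $p_\psi(x_n,n)\sim D_{d,\alpha}\,\psi^{-1}(n^{-1})^{d/2}$ along any admissible sequence, which is in effect part (ii) of Theorem \ref{thm:3} directly, at the cost of being purely qualitative. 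Both proofs rest on Theorem \ref{thm:4}'s machinery and on the same smallness of $a_n\norm{x}$, so the difference is one of packaging rather than substance, but your version is a legitimate and self-contained alternative.
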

\begin{proof}
	By the Fourier inversion formula and Lemma \ref{newFourier}, we may write
	\begin{align*}
		\big| p_\psi(x, n) - p_\psi(0, n) \big|
		& \leq
		\bigg(\frac{1}{2\pi}\bigg)^d
		\int_{\mathscr{D}_d}
		\big|1 - \psi(1 - \Phi(\theta))\big|^n
		\big| e^{-i \sprod{\theta}{x}} - 1\big| \dth \\
		& \leq
		C
		\norm{x}
		\int_{\mathscr{D}_d}
		\big|1 - \psi(1 - \Phi(\theta))\big|^n \norm{\theta} \dth.
	\end{align*}
	Now, essentially the same argument as in the proof of Theorem \ref{thm:4}, shows that there is 
	$C > 0$ such that for all $n \in \NN$ and $x \in \ZZ^d$
	\[
		\big| p_\psi(x, n) - p_\psi(0, n) \big|
		\leq
		C
		\norm{x} \psi^{-1}\big(n^{-1}\big)^{\frac{d+1}{2}},
	\]
	which, together with Theorem \ref{thm:4}, implies
	\[
		\big| p_\psi(x, n) - p_\psi(0, n) \big|
		\leq
		C
		p_\psi(0, n) \norm{x} \psi^{-1}\big(n^{-1}\big)^{\frac{1}{2}}.
	\]
	To conclude the proof we need to show that $\norm{x}^2 \psi^{-1}\big(n^{-1}\big)$ approaches
	zero when $n \psi\big(\norm{x}^{-2}\big)$ tends to infinity. To see this, let us observe that
	$\psi^{-1}$ is regularly varying of index $2/\alpha$ at zero. By \eqref{eq:22}, there is
	$\delta > 0$ such that if $n^{-1} \leq \delta$ and $\psi\big(\norm{x}^{-2}\big) \leq \delta$ then
	\[
		\psi^{-1}\big(n^{-1}\big) \leq 2 \psi^{-1}\big(\psi\big(\norm{x}^{-2}\big)\big) 
		\big(n \psi\big(\norm{x}^{-2}\big)\big)^{-3/\alpha},
	\]
	thus
	\[
		\norm{x}^2 \psi^{-1}\big(n^{-1}\big) \leq 2 \big(n \psi\big(\norm{x}^{-2}\big)\big)^{-3/\alpha},
	\]
	which finishes the proof.
\end{proof}

\subsection{Asymptotic of $p_\psi$ as $n \psi\big(\norm{x}^{-2}\big)$ tends to zero}
\begin{theorem}
	\label{thm:1}
	Let $p$ be the one-step probability for random walk $S$ in the class $\mathscr{S}$ having a period $r$.
	As $n \psi\big(\norm{x}^{-2}\big) $ tends to zero we have
	\footnote{$\delta_0$ is the Dirac mass at the origin.}
	\begin{equation}
		\label{eq:28}
		\big(\delta_0 + p + p^{(2)} + \cdots + p^{(r-1)}\big)*p_\psi(x, n) 
		\sim 
		r C_{d, \alpha} n \norm{x}^{-d} \psi\big(\norm{x}^{-2}\big),
	\end{equation}
	where $\norm{x}^2 = \sprod{Q^{-1} x}{x}$, and 
	\begin{equation*}
		C_{d,\alpha} 
		= \alpha 2^{\alpha/2} \pi^{-d/2-1} (\det Q)^{-1/2}
		\Gamma\Big(\frac{\alpha}{2}\Big) \Gamma\Big(\frac{d + \alpha}{2}\Big)
		\sin\Big(\frac{\pi \alpha}{2}\Big).
	\end{equation*}
\end{theorem}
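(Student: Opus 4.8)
The plan is to reduce \eqref{eq:28} to the subordination representation \eqref{eq:6} and then to the tail asymptotic of $\tau_n$ furnished by Theorem \ref{thm:2}, mimicking Bendikov's proof of the continuous P\'{o}lya formula. Writing $p^{(0)} = \delta_0$ and using $p^{(j)} * p^{(k)} = p^{(j+k)}$, the left-hand side of \eqref{eq:28} becomes
\[
	\big(\delta_0 + p + \cdots + p^{(r-1)}\big) * p_\psi(x, n) = \sum_{k \geq n} \PP(\tau_n = k) \sum_{j = 0}^{r-1} p(x, k + j).
\]
The inner sum is exactly the object controlled by Corollary \ref{cor:1}: for $\norm{x} \leq k^{2/3}$ it equals $r\, h(x, k)\big(1 + \calO(k^{-1}) + \calO(k^{-2} \norm{x}^3)\big)$, where $h(x, k) = (2\pi k)^{-d/2} (\det Q)^{-1/2} e^{-\norm{x}^2/(2k)}$ is the Gaussian heat kernel associated with $Q$. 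On the dominant range $k \asymp \norm{x}^2$ both error terms are $o(1)$, so up to lower order the left-hand side equals $r \sum_{k \geq n} \PP(\tau_n = k)\, h(x, k) = r\, \EE\big[h(x, \tau_n)\big]$.

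Before evaluating this expectation I would dispose of the range where Corollary \ref{cor:1} does not apply. Since $p$ is finitely supported, $p(x, k) = 0$ unless $k \gtrsim \norm{x}$, and on the band $\norm{x}/R \leq k \leq \norm{x}^{3/2}$ (with $R$ the range of $p$) a standard Gaussian (Carne--Varopoulos) upper bound gives $p(x, k) \leq C \exp(-c \norm{x}^2 / k) \leq C \exp(-c \norm{x}^{1/2})$; together with $\sum_k \PP(\tau_n = k) \leq 1$ this contribution is super-exponentially small in $\norm{x}$, hence negligible next to the main term, which is only polynomially small of order $n \norm{x}^{-d} \psi(\norm{x}^{-2})$ (here one uses the Potter bounds \eqref{eq:22} to see that $n \psi(\norm{x}^{-2})$ decays at most polynomially).

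To evaluate $\EE[h(x, \tau_n)]$ I would pass to the Stieltjes integral $\int_0^\infty h(x, t) \, dF_n(t)$ and integrate by parts; the boundary terms vanish because $h(x, t) \to 0$ as $t \to 0^+$ and $1 - F_n(t) \to 0$ as $t \to \infty$, leaving $\int_0^\infty \partial_t h(x, t)\, (1 - F_n(t)) \, dt$. Theorem \ref{thm:2} supplies $1 - F_n(t) \sim n \psi(t^{-1}) / \Gamma(1 - \alpha/2)$ precisely in the regime $n \psi(t^{-1}) \to 0$, and since $\partial_t h(x, t)$ concentrates its mass at $t \asymp \norm{x}^2$, where $n \psi(t^{-1}) \asymp n \psi(\norm{x}^{-2}) \to 0$ by hypothesis, this asymptotic applies. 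The substitution $t = \norm{x}^2 u$ writes $h(x, \norm{x}^2 u) = \norm{x}^{-d} H(u)$ with the fixed profile $H(u) = (2\pi u)^{-d/2} (\det Q)^{-1/2} e^{-1/(2u)}$, while the regular variation of $\psi$ (Claim \ref{cl:1}, or the uniform convergence theorem) replaces $\psi(t^{-1})$ by $u^{-\alpha/2} \psi(\norm{x}^{-2})$. A further integration by parts reduces the remaining $u$-integral to
\[
	\int_0^\infty u^{-(d + \alpha)/2 - 1} e^{-1/(2u)} \, du = 2^{(d + \alpha)/2} \Gamma\Big(\tfrac{d + \alpha}{2}\Big),
\]
and collecting the prefactors $\tfrac{\alpha}{2} (2\pi)^{-d/2} (\det Q)^{-1/2}$ together with the reflection formula $\Gamma(\alpha/2) \Gamma(1 - \alpha/2) = \pi / \sin(\pi \alpha / 2)$ produces the asserted asymptotic \eqref{eq:28}, the constant $C_{d, \alpha}$ arising from this Gamma integral and the reflection formula.

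The main obstacle is making the double limit (in $n$ and $\norm{x}$) commute with the integral: the pointwise tail asymptotic of Theorem \ref{thm:2} and the pointwise regular variation of $\psi$ must be upgraded to a dominated-convergence statement, uniformly in the relevant parameters. I would handle this exactly as in the proof of Theorem \ref{thm:4}, constructing an integrable majorant for the rescaled integrand from the Gaussian decay of $H$ together with the Potter bounds \eqref{eq:22}; the slowly varying factor hidden in $\psi$ is absorbed by \eqref{eq:22} uniformly over the band $t \asymp \norm{x}^2$, and the Gaussian upper bound from the second paragraph controls the region $u \to 0^+$ where the local limit theorem is unavailable. The residual bookkeeping of the $\calO(k^{-1})$ and $\calO(k^{-2}\norm{x}^3)$ errors from Corollary \ref{cor:1} is routine, since on the dominant range $k \asymp \norm{x}^2$ both are $o(1)$.
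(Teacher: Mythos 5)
Your overall strategy is the same as the paper's: expand the left-hand side as $\sum_k \PP(\tau_n=k)\sum_j p(x,k+j)$, discard a range of small $k$ by Gaussian upper bounds, apply the local limit theorem for the remaining $k$, pass to a Stieltjes integral against $1-F_n$, integrate by parts, rescale by $t=\norm{x}^2 u$, and invoke Theorem \ref{thm:2} together with the Potter bounds and dominated convergence; the constant comes out of the same Gamma integral and reflection formula. The one place where your plan has a genuine hole is the location of the cutoff between the two regimes. You cut at $k=\norm{x}^{3/2}$, below which the Gaussian bound indeed gives a super-exponentially small contribution. But just above $\norm{x}^{3/2}$ you are not yet in the regime of Theorem \ref{thm:2}: at $t=\norm{x}^{3/2}$ regular variation gives $n\psi\big(t^{-1}\big)\approx n\psi\big(\norm{x}^{-2}\big)\norm{x}^{\alpha/4}$, and since the hypothesis only provides $n\psi\big(\norm{x}^{-2}\big)\to 0$ (possibly as slowly as $\norm{x}^{-\alpha/8}$), this quantity can tend to infinity. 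On the band $\norm{x}^{3/2}\leq t\leq \epsilon\norm{x}^2$ you therefore have neither the tail asymptotic $1-F_n(t)\sim n\psi\big(t^{-1}\big)/\Gamma(1-\alpha/2)$ nor an adequate substitute: the trivial bound $1-F_n\leq 1$ is useless once you divide by the normalization $n\psi\big(\norm{x}^{-2}\big)\to 0$, and the Gaussian factor $e^{-\norm{x}^2/(2t)}$ is only $e^{-1/(2\epsilon)}$ at the top of that band, nowhere near $o\big(n\psi\big(\norm{x}^{-2}\big)\big)$. So the integrable majorant you invoke ``as in Theorem \ref{thm:4}'' does not exist on that band, and dominated convergence cannot be run there.

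The paper resolves this by tuning the cutoff to $k_0\asymp \norm{x}^2/\big\lvert\log\big\{n\psi\big(\norm{x}^{-2}\big)\big\}\big\rvert$: this is large enough that $e^{-c\norm{x}^2/k_0}$ is a power of $n\psi\big(\norm{x}^{-2}\big)$ strictly greater than the first, so the Gaussian bound still annihilates everything below $k_0$; yet small enough that --- and this is the content of Claim \ref{cl:2} --- $n\psi\big(k_0^{-1}\big)\leq C\big(n\psi\big(\norm{x}^{-2}\big)\big)^{(1-\alpha/2)/2}\to 0$, so Theorem \ref{thm:2} applies uniformly on $[k_0,\infty)$. The same claim gives $k_0\geq C\norm{x}^{7/4}$, which turns the $\calO\big(k^{-2}\norm{x}^3\big)$ error of Corollary \ref{cor:1} into a uniform $\calO\big(k^{-2/7}\big)$, whereas with your cutoff that error is only $\calO(1)$ at $k\asymp\norm{x}^{3/2}$. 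Verifying that the Gaussian-controlled and the Theorem~\ref{thm:2}-controlled regimes actually overlap at a common cutoff is the technical heart of the proof and is missing from your plan; the rest of your outline matches the paper step for step.
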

\begin{proof}
	First, we observe that by Tonelli's theorem
	\[
		p^{(j)} * p_\psi(x, n) = \sum_{k = 1}^\infty p^{(j)}*p(x, k) \PP(\tau_n = k)
		= \sum_{k = 1}^\infty p(x, k+j) \PP(\tau_n = k),
	\]
	thus
	\[
		\big(\delta_0 + p + p^{(2)} + \cdots + p^{(r-1)}\big)*p_\psi(x, n) 
		=
		\sum_{k = 1}^\infty \sum_{j = 0}^{r-1} p(x, k + j) \PP(\tau_n = k).
	\]
	Given $k_0 > r$, we consider a sum
	\[
		\calS_1(x, n) = \sum_{k = 1}^{k_0} \sum_{j=0}^{r-1} p(x, k+j) \PP(\tau_n = k).
	\]
	Using the bounds of Alexopoulos \cite[Theorem 1.8]{alexopulos},
	\[
		p(x, k) \leq C k^{-d/2}\exp \big\{ -4 C_0'\norm{x}^2/k\big\},
	\]
	thus for $k \in \{1, \ldots, k_0+r\}$ we get
	\begin{align*}
		p(x, k) 
		&\leq C \norm{x}^{-d} \exp\big\{- 2 C_0' \norm{x}^2/(k_0+r) \big\}
		\frac{\norm{x}^d}{k^{d/2}} \exp\big\{- 2 C_0' \norm{x}^2/k \big\} \\
		&\leq C' \norm{x}^{-d} \exp\big\{- C_0' \norm{x}^2/k_0 \big\}.
	\end{align*}
	Hence, we can estimate 
	\begin{align*}
		\calS_1(x, n)
		& \leq
		C'' \norm{x}^{-d} \exp\big\{- C_0' \norm{x}^2/k_0 \big\}
		\sum_{k = 1}^\infty \PP(\tau_n = k).
	\end{align*}
	For $x \in \ZZ^d$ and $n \in \NN$, we define
	\[
		k_0 = \bigg[
		- \frac{\min\{C_0', 2\}}{4} \cdot \frac{\norm{x}^2}{\log \big\{n \psi\big(\norm{x}^{-2} \big) \big\}}
		\bigg].
	\]
	Then we have
	\begin{equation}
		\label{eq:15}
		\calS_1(x, n) = o\Big(n \norm{x}^{-d} \psi\big(\norm{x}^{-2}\big)\Big).
	\end{equation}
	\begin{claim}
		\label{cl:2}
		There are $C > 0$ and $R > 0$ such that for all $n \in \NN$ and $x \in \ZZ^d$, if $\norm{x} \geq R$ then
		\begin{align}
			\label{eq:24a}
			k_0 &\geq C \norm{x}^{7/4},\\
			\label{eq:24b}
			n \psi\big(k_0^{-1}\big) &\leq C \big(n \psi\big(\norm{x}^{-2}\big)\big)^{(1-\alpha/2)/2}.
		\end{align}
	\end{claim}

	Assuming for a moment the validity of Claim \ref{cl:2}, we proceed with the proof of the theorem. We need to find
	the asymptotic of a sum
	\begin{equation*}
		\calS_2(x, n) = \sum_{k >k_0} \sum_{j = 0}^{r-1} p(x, k+j) \PP(\tau_n = k).
	\end{equation*}
 	To do so we want to use the asymptotic from Corollary \ref{cor:1}. Since \eqref{eq:24a} implies that for $k \geq k_0$,
	\[
		\norm{x} \leq C k^{4/7},
	\]
	and
	\[
		k^{-2} \norm{x}^3 \leq C k^{-2/7},
	\]
	there is $C > 0$ such that
	\begin{multline*}
		\Big\lvert
		\sum_{j = 0}^{r-1}
		p(x, k+j)
		- 
		r (2 \pi)^{-d/2} (\det Q)^{-1/2} k^{-d/2} \exp\{-\norm{x}^2/(2k)\}
		\Big\rvert \\
		\leq
		C
		k^{-d/2-2/7}
		\exp\{-\norm{x}^2 /(2k)\}
	\end{multline*}
	for all $k > k_0$. Then, by setting
	\begin{equation*}
		\calI(x, n) = 
		r (2 \pi)^{-d/2} (\det Q)^{-1/2}
		\sum_{k \geq k_0}
		k^{-d/2} \exp\{-\norm{x}^2/(2k)\} \PP(\tau_n = k),
	\end{equation*}
	we obtain
	\begin{align}
		\label{eq:14}
		\big\lvert \calS_2(x, n) - \calI(x, n) \big\rvert 
		& \leq C \sum_{k \geq k_0}
		k^{-d/2-2/7} \exp\{-\norm{x}^2/(2k)\} \PP(\tau_n = k)  \notag \\
		& \leq C_1 k_0^{-2/7}
		\calI(x, n). 
	\end{align}
	Next, for $G_n(t) = 1-F_n(t) = \PP (\tau_n > t)$ we can write
	\begin{equation}
		\label{eq:5}
		\calI(x, n) 
		= 
		-r
		(2 \pi)^{-d/2} (\det Q)^{-1/2}
		\int_{[k_0, \infty)} t^{-d/2} \exp\{-\norm{x}^2/(2t)\} {\: \rm d} G_n(t).
	\end{equation}
	Let 
	\begin{align*}
		I(x, n)=r (2 \pi)^{-d/2} (\det Q)^{-1/2}
		\int_{k_0}^\infty
		\frac{\rm d}{{\rm d}t}
		\Big(t^{-d/2} \exp\{-\norm{x}^2/(2t)\} \Big) G_n(t) {\: \rm d}t.
	\end{align*}
	Since in \eqref{eq:5} the integrand is continuous and $G_n$ is monotone, the integration by parts yields
	\begin{align}
		\label{eq:16}
		\big\lvert \calI(x, n) - I(x, n) \big\rvert 
		& \leq C_1 k_0^{-d/2}\exp\{-\norm{x}^2 /(2 k_0)\}  \notag \\
		&=o\Big(\norm{x}^{-d} n\psi \big(\norm{x}^{-2}\big)\Big),
	\end{align}
	where in the last step we have used the definition of $k_0$. To find the asymptotic behaviour of $I(n,x)$ we compute
	\begin{multline}
		\label{eq:29}
		\frac{\rm d}{{\rm d}t}
		\Big(t^{-d/2}\exp\{-\norm{x}^2 / (2t) \}\Big) \\
		=-\frac{d}{2} t^{-d/2-1}\exp\{-\norm{x}^2/(2t)\} + \frac{\norm{x}^2}{2} t^{-d/2 - 2}\exp\{-\norm{x}^2/(2t)\}.
	\end{multline}
	Next, let us define two integrals
	\begin{align*}
		J_1 &= \frac{d}{2} \int_{k_0}^\infty t^{-d/2- 1} \exp\{-\norm{x}^2/(2t) \}G_n(t) {\: \rm d} t,\\
		J_2 &= \frac{\norm{x}^2}{2} \int_{k_0}^\infty t^{-d/2 -2} \exp\{-\norm{x}^2/(2t)\} G_n(t) {\: \rm d}t.
	\end{align*}
	Then, by \eqref{eq:29}, we have
	\[
		I(x, n) = r (2\pi)^{-d/2} (\det Q)^{-1/2} \big(J_2 - J_1\big). 
	\]
	To show the asymptotic of $J_1$ and $J_2$ we need the following lemma.
	\begin{lemma}
		\label{lem:3} 
		For any $\beta >1$
		\begin{multline*}
			\int_{k_0}^{\infty} t^{-\beta} \ell(t) \exp\{-\norm{x}^2/(2t) \} G_n(t) {\: \rm d}t \\ 
			\sim 
			2^{\beta+\alpha/2-1} \frac{\Gamma(\beta+\alpha/2-1)}{\Gamma(1 - \alpha/2)}
			n \norm{x}^{-2(\beta-1)} \psi\big(\norm{x}^{-2}\big),
		\end{multline*}
		as $n \psi\big(\norm{x}^{-2}\big)$ tends to zero.
	\end{lemma}
	\begin{proof}[Proof of Lemma \ref{lem:3}]
		First, we change variables by setting $u = \norm{x}^{-2}t$
		\begin{multline*}
			\frac{1}{n \norm{x}^{-2(\beta-1)}\psi\big(\norm{x}^{-2}\big)}
			\int_{k_0}^{\infty} t^{-\beta} \exp\{-\norm{x}^2/(2t)\} G_n(t) {\: \rm d}t \\
			= 
			\int_{\norm{x}^{-2} k_0}^\infty
			u^{-\beta}  
			\frac{G_n\big(\norm{x}^2 u\big)}{n \psi\big(\norm{x}^{-2} u^{-1}\big)}
			\frac{\psi\big(\norm{x}^{-2} u^{-1}\big)}{\psi\big(\norm{x}^{-2}\big)}
			\exp\{-1/(2u)\} {\: \rm d}u.
		\end{multline*}
		To estimate the integrand, by \eqref{eq:24b} and Theorem \ref{thm:2}, we have
		\[
			\frac{G_n\big(\norm{x}^2 u\big)}{n \psi\big(\norm{x}^{-2} u^{-1} \big)}
			\leq
			C,
		\]
		and, by \eqref{eq:22}
		\[
			\frac{\psi\big(\norm{x}^{-2} u^{-1} \big)}{\psi\big(\norm{x}^{-2}\big)} 
			\leq C u^{-\alpha/2}
			\max\big\{u^{-\frac{\beta -1}{2}}, u^{\frac{\beta -1}{2}}\big\},
		\]
		thus by applying the dominated convergence we obtain  
		\begin{multline*}
			\int_{\norm{x}^{-2} k_0}^\infty
			u^{-\beta} 
			\frac{G_n\big(\norm{x}^2 u\big)}{n \psi\big(\norm{x}^{-2} u^{-1}\big)}
            \frac{\psi\big(\norm{x}^{-2} u^{-1}\big)}{\psi\big(\norm{x}^{-2}\big)}
			\exp\{-1/(2u)\}
			{\: \rm d}u \\
			\sim
			\frac{1}{\Gamma(1- \alpha/2)}
			\int_0^\infty u^{-\beta-\alpha/2} \exp\{-1/(2u)\} {\: \rm d}u,
		\end{multline*}
		as $n \psi\big(\norm{x}^{-2}\big)$ tends to zero, and the proof of the lemma is finished.
		\end{proof}
		We now return to showing the asymptotic of $I(x, n)$. For $J_1$, by applying Lemma \ref{lem:3}, we obtain
		\begin{equation}
			\label{eq:27a}
			J_1 
			\sim 
			\frac{d}{2} 2^{(d+\alpha)/2} \frac{ \Gamma\left((d+\alpha)/2\right)}
			{\Gamma (1-\alpha /2)} n \norm{x}^{-d} \psi \big(\norm{x}^{-2}\big),
		\end{equation}
		as $n \psi \big(\norm{x}^{-2}\big)$ tends to zero. In the case of $J_2$, we obtain
		\begin{equation}
			\label{eq:27b}
			J_2
			\sim
			2^{(d+\alpha)/2} \frac{ \Gamma\left((d+\alpha+2)/2\right)}
			{\Gamma (1-\alpha /2)} n \norm{x}^{-d} \psi \big(\norm{x}^{-2}\big),
		\end{equation}
		as $n \psi \big(\norm{x}^{-2}\big)$ approaches zero. Putting \eqref{eq:27a} and \eqref{eq:27b} together
		we conclude
		\begin{equation}
			\label{eq:27}
			I(x, n) \sim r C_{d, \alpha} 
			n \norm{x}^{-d} \psi \big(\norm{x}^{-2}\big).
		\end{equation}
		Now, by \eqref{eq:14} and \eqref{eq:16},
		\[
			\calS_2(x, n) = \Big(I(x, n) + o\Big(n\norm{x}^{-d} \psi\big(\norm{x}^{-2}\big)\Big)\Big)\big(1 + o(1)\big),
		\]
		thus \eqref{eq:27} yields
		\[
			\calS_2(x, n) = n \norm{x}^{-d} \psi\big(\norm{x}^{-2}\big) \big(r C_{d,\alpha} + o(1)\big),
		\]
		what together with \eqref{eq:15} gives the desired asymptotic \eqref{eq:28}.
		
		To finish the proof of the theorem we need to justify Claim \ref{cl:2}. First, by \eqref{eq:22}, there is
		$R > 0$ such that for $\norm{x} \geq R$ and $n \in \NN$
		\[
			n \psi\big(\norm{x}^{-2} \big) \geq \psi\big(\norm{x}^{-2}\big) \geq \norm{x}^{-3\alpha}. 
		\]
		Hence,
		\[
			-\log \big\{n \psi\big(\norm{x}^{-2}\big)\big\}
			\leq
			3 \alpha \log \norm{x},
		\]
		and therefore
		\[
			k_0 \geq C \norm{x}^2/\log\norm{x},
		\]
		which easily implies \eqref{eq:24a}.
		
		For the proof of (ii), again using \eqref{eq:22}, there are $R > 0$
		and $C > 1$ such that for all $n \in \NN$ and $\norm{x} \geq R$
		\[
			\ell(k_0) \leq C \ell\big(\norm{x}^2\big) 
			\Big(-\log \big\{n \psi\big(\norm{x}^{-2} \big)\big\}\Big)^{(1-\alpha/2)/2}.
		\]
		Therefore we obtain
		\begin{align*}
			n \psi\big(k_0^{-1}\big) = n k_0^{-\alpha/2} \ell(k_0)
			&\leq
			C\, n\psi\big(\norm{x}^{-2}\big)
			\Big(-\log \big\{n \psi\big(\norm{x}^{-2} \big) \big\}\Big)^{(1+\alpha/2)/2}\\
			&\leq 
			C\, \left(n \psi\big(\norm{x}^{-2} \big)\right)^{(1-\alpha/2)/2},
		\end{align*}
		and the proof of Claim \ref{cl:2} is finished.
\end{proof}

\begin{remark}
	One possible application of Theorem \ref{thm:1} would be to study the Green function $G_\psi$
	of the subordinate random walk
		\[ 
			G_\psi (x) = \sum _{n\geq 0}p_\psi (x,n).
		\]
	We have
	\[
		G_\psi(x) = \sum_{k \geq 1} p(x, k) C(k)
	\]
	where $(C(k): k\geq 0)$ satisfies  
	\[
		C(0)=1\quad \mathrm{and}\quad C(k) = \sum _{n=0}^k c(\psi ,n)C(k-n),\ \ k\geq 1.
	\]
	Observe that the asymptotic formula of Theorem \ref{thm:1} does not provide enough information to obtain 
	asymptotic formula for the Green function $G_\psi(x)$ at infinity. One way to study asymptotic decay of $G_\psi$
	at infinity would be to study asymptotic properties of the sequence $(C(k) : k \geq 0)$. 
	Under certain special assumptions on $\psi $ this problem was solved in the recent paper of Bendikov and Cygan 	
	 \cite{cygan}. In this connection we also mention the closely related paper of Williamson \cite{williamson}.
\end{remark}
\section*{Acknowledgements}
We owe to thank Konrad Kolesko and Stanislav Molchanov for helpful and stimulating discussions. We thank
anonymous referee for helpful comments and suggestions.

\begin{bibliography}{random_walks}
	\bibliographystyle{amsplain}
\end{bibliography}
\end{document}